\colorlet{shadecolor}{blue!15}
\newtheorem{theorem}{Theorem}[section]
\newtheorem{lemma}[theorem]{Lemma}
\newtheorem{proposition}[theorem]{Proposition}
\newtheorem{definition}[theorem]{Definition}
\newtheorem{remark}[theorem]{Remark}
\newtheorem{claim}[theorem]{Claim}
\newcommand{\be}[1]{\begin{equation}\label{#1}}
\newcommand{\ee}{\end{equation}}
\numberwithin{equation}{section}
\newcommand{\ba}[1]{\begin{align}\label{#1}}
\newcommand{\ea}{\end{align}}
\numberwithin{equation}{section}
\newcommand{\ben}{\begin{equation*}}
\newcommand{\een}{\end{equation*}}
\numberwithin{equation}{section}
\newenvironment{proof}[1][\relax]
  {\paragraph{Proof\ifx#1\relax\else~of #1\fi:}}%
  {~\hfill$\square$\par\bigskip}
\newcommand{\calA}{\mathcal{A}}
\newcommand{\calB}{\mathcal{B}}
\newcommand{\calC}{\mathcal{C}}
\newcommand{\calE}{\mathcal{E}}
\newcommand{\calG}{\mathcal{G}}
\newcommand{\eps}{\varepsilon}
\newcommand{\La}{\Lambda}
\setlist[itemize]{itemsep=0pt, topsep=2pt}
\setlist[enumerate]{itemsep=1pt, topsep=4pt}
\newcommand{\rk}[1]{\bgroup\color{red}%
  \par\medskip\hrule\smallskip%
  \noindent\textbf{#1}%
  \par\smallskip\hrule\medskip\egroup}
\title{Near critical scaling relations for planar Bernoulli percolation without differential inequalities}
\author{Hugo Duminil-Copin\footnote{Universit\'e de Gen\`eve (hugo.duminil@unige.ch) and Institut des Hautes \'Etudes Scientifiques (duminil@ihes.fr)}, \ 
Ioan Manolescu\footnote{Universit\'e de Fribourg (ioan.manolescu@unifr.ch)}\ \ and 
Vincent Tassion\footnote{ETHZ (vincent.tassion@math.ethz.ch)}}
\date{\today}
\begin{document}
\maketitle

\begin{abstract}
	We provide a new proof of the near-critical scaling relation $\beta=\xi_1\nu$ for Bernoulli percolation on the square lattice already proved by Kesten in 1987. 
    We rely on a novel approach that does not invoke Russo's formula, but rather relates differences in crossing probabilities at different scales. The argument is shorter and more robust than previous ones and is more likely to be adapted to other models. The same approach may be used to prove the other scaling relations appearing in Kesten's work.
\end{abstract}

\section{Introduction}

Consider the square lattice  with vertex-set $\mathbb Z^2$ and edge-set $E(\mathbb Z^2):=\{\{x,y\}\subset \mathbb Z^2:\|x-y\|_2=1\}$. Let $\mathrm P_p$ be  the Bernoulli percolation measure on $\{0,1\}^{E(\mathbb Z^2)}$ defined as the law of i.i.d.~Bernoulli random variables $(\omega(e):e\in E(\mathbb Z^2))$  with parameter $p$.  A configuration $\omega\in \{0,1\}^{E(\mathbb Z^2)}$ is identified with the graph with vertex-set $\mathbb Z^2$ and edge-set $\{e\in E(\mathbb Z^2):\omega(e)=1\}$, and paths in $\omega$ are identified to continuous, piecewise linear paths drawn in the plane. 

We will assume basic knowledge of Bernoulli percolation and refer to \cite{Gri99} for background. This paper is meant to be largely self-contained; the only tool specific to two-dimensional percolation that is used is the Russo-Seymour-Welsh (RSW) theory, which is discussed in Appendix~\ref{sec:lem}.

For $n\ge0$ and $x\in \mathbb Z^2$, define $\Lambda_n:=[-n,n]^2$ and $\partial\Lambda_n:=\Lambda_n\setminus\Lambda_{n-1}$, as well as their translates $\Lambda_n(x)$ and $\partial\Lambda_n(x)$ by $x$. 
We will often identify the these sets with the sets of vertices or edges contained in them. 
For any $r\ge0$ and $p\in[0,1]$, introduce the quantities
\begin{align*}
	\theta_r(p)&:=\mathrm P_p[0\text{ is connected to }\partial\Lambda_{\lfloor r \rfloor }],\\
	\theta(p)&:=\lim_{N\rightarrow\infty}\theta_N(p),\\
	\xi(p)&:=\lim_{N\rightarrow\infty} - N/\log[\theta_N(p)-\theta(p)],
\end{align*}
where $\theta(p)$ and $\xi(p)$ are respectively called the {\em infinite cluster density} and the {\em correlation length} (the justification that the limits exist is classical: the first one by monotonicity, the second one by a submultiplicativity argument). 

Kesten \cite{Kes80} proved that Bernoulli percolation on the square lattice undergoes a continuous \emph{sharp} phase transition at $p=p_c=1/2$ in the sense that
\begin{itemize}
\item $\theta(p)>0$ for every $p>p_c$ and $\theta(p)=0$ for $p\le p_c$, and that 
\item  $\xi(p)<\infty$ for every $p\ne p_c$, which is to say that $\theta_N(p)$ converges to $\theta(p)$ at an exponential rate for $p\neq p_c$.
\end{itemize}

It is expected that the quantities above satisfy the following asymptotics: 
\begin{align*}
\theta_N(p_c)&=N^{-\xi_1+o(1)}\qquad\text{ as $N\rightarrow\infty$,}\\
\theta(p)&=(p-p_c)^{\beta+o(1)}\ \,\text{ as $p\searrow p_c$,}\\
\xi(p)&=|p-p_c|^{-\nu+o(1)}\ \,\text{ as $p\rightarrow p_c$},
\end{align*}
for certain {\em critical exponents} $\xi_1$, $\beta$, and $\nu$, where $o(1)$ denotes a quantity tending to zero.
While these exponents were only proved to exist in the case of site percolation on the triangular lattice~\cite{SW01},
and their values were shown to be $5/48$, $5/36$, and $3/4$, respectively, they are expected to be universal among independent planar percolation models. 
More importantly for us, they are expected to satisfy
\begin{align}
\beta&=\nu\xi_1.\label{eq:aa1}
\end{align}

The above is one of several {\em scaling relations} which relate different critical exponents, and which are expected to hold for more general statistical mechanics models.
For Bernoulli percolation, \eqref{eq:aa1} along with several other scaling relations -- or rather formulas that imply the relations if the critical exponents exist -- were proved in a celebrated paper of Kesten \cite{Kes87}.
These results were recently extended to the more general FK-percolation model \cite{DM20}. 
 
Our main goal is to provide a new proof of Kesten's result; we will focus on \eqref{eq:aa1}.

\begin{theorem}[Scaling relation between $\beta$, $\nu$, and $\xi_1$.]\label{thm:1}
	There exist $c,C>0$ such that for every $p>p_c$,
	\begin{align*}
		c\, \theta_{\xi(p)}(p_c)~&\le~\theta(p)~\le~ C\,\theta_{\xi(p)}(p_c).
	\end{align*}
\end{theorem}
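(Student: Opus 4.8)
The plan is to prove the two inequalities essentially separately, using the characteristic length $L(p)$ defined (up to constants) as the smallest scale at which the crossing probability of a rectangle drops below some fixed threshold, and then reconcile $L(p)$ with $\xi(p)$. The first, and more classical, step is the well-known comparison $L(p) \asymp \xi(p)$ for $p > p_c$: below scale $L(p)$ crossing probabilities stay bounded away from $0$ and $1$ so RSW applies and the model behaves like at criticality, while above scale $L(p)$ connection probabilities decay exponentially with rate $\asymp 1/L(p)$, which is exactly the defining property of $\xi(p)$. Granting this, it suffices to prove $c\,\theta_{L(p)}(p_c) \le \theta(p) \le C\,\theta_{L(p)}(p_c)$, and moreover one may freely replace $\theta_{L(p)}(p_c)$ by $\theta_{L(p)}(p)$ (and by the four-arm/one-arm quantities at scale $L(p)$) since all these are comparable up to constants for $p$ close to $p_c$, by RSW and quasi-multiplicativity of arm events below the characteristic length.

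For the upper bound $\theta(p) \le C\,\theta_{L(p)}(p)$ one just notes that $\{0 \lr \partial\Lambda_{L(p)}\}$ contains $\{0 \lr \infty\}$, so $\theta(p) \le \theta_{L(p)}(p)$ trivially; the content is entirely in the matching lower bound. For the lower bound one would like to say that, conditionally on $0$ being connected to $\partial\Lambda_{L(p)}$, the cluster of $0$ has a good chance of surviving to infinity. The standard way to see this is a renormalization on the coarse-grained lattice of boxes of size $L(p)$: above this scale, the probability that a box is "good" (crossed in the long direction, with a long cluster connected appropriately) is close to $1$ uniformly in $p$ near $p_c$, so a supercritical site-percolation / Peierls argument on the coarse lattice shows that with probability bounded below the cluster emanating from $\Lambda_{L(p)}$ reaches infinity; combining with the fact that $0$ reaches $\partial\Lambda_{L(p)}$ with probability $\theta_{L(p)}(p)$ and gluing the two events together using RSW in the annulus near scale $L(p)$ (and the FKG inequality / a sprinkling argument to pay for the connection) yields $\theta(p) \ge c\,\theta_{L(p)}(p)$.

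In the spirit of this paper, however, I would instead avoid the coarse-grained Peierls argument and work directly with differences of crossing probabilities at successive dyadic scales $L(p), 2L(p), 4L(p), \dots$: the point is that above $L(p)$ the one-arm probability decays geometrically, $\theta_{2^{k+1}L(p)}(p) \le (1-\delta)\,\theta_{2^k L(p)}(p)$ for a uniform $\delta>0$, because inside each dyadic annulus above the characteristic length there is a uniformly positive probability of a dual circuit disconnecting $0$ from infinity (this is again RSW, now in the regime where the dual model is "supercritical at that scale"); summing the geometric series and comparing with $\theta(p) = \lim_k \theta_{2^k L(p)}(p) \cdot \prod(\cdots)$ — more precisely bounding $\theta(p)$ below by $\theta_{L(p)}(p)$ times the probability of never being disconnected in any annulus, which is again bounded below by a convergent product — gives the result. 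The main obstacle is the uniformity: one must show that the disconnection-probability lower bound $\delta$ in each dyadic annulus above $L(p)$, and the non-degeneracy of crossing probabilities below $L(p)$, hold with constants independent of $p$ as $p \to p_c$. This is precisely where the characterization of $L(p)$ as the crossing-probability length is used, together with RSW applied at the scale of the characteristic length; making this quantitative and checking that the same constants let one pass between $\xi(p)$ and $L(p)$ is the technical heart of the argument.
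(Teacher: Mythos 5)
The proposal has a genuine gap at its central step. You write that ``one may freely replace $\theta_{L(p)}(p_c)$ by $\theta_{L(p)}(p)$ \dots\ since all these are comparable up to constants for $p$ close to $p_c$, by RSW and quasi-multiplicativity of arm events below the characteristic length.'' This claim is false as a consequence of RSW and quasi-multiplicativity alone, and it is precisely the hard content of the paper. RSW gives that \emph{crossing} probabilities of fixed aspect-ratio rectangles are uniformly bounded away from $0$ and $1$ below the correlation length, and quasi-multiplicativity relates arm probabilities \emph{at different scales for the same parameter}; neither controls the ratio $\theta_N(p)/\theta_N(p_c)$, which decays polynomially in $N$ and whose constant could a priori blow up as $N \to \xi(p)$. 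Monotonicity trivially gives $\theta_N(p_c) \le \theta_N(p)$; the reverse inequality $\theta_N(p)\le C\,\theta_N(p_c)$ uniformly in $N\le\xi(p)$ is exactly Theorem~\ref{thm:stability}, whose proof occupies Sections~2--4 via Propositions~\ref{prop:comparison scale Delta} and~\ref{prop:comparison delta Delta}. By invoking this comparability as an RSW consequence, your proposal silently assumes the theorem you are supposed to be proving.

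Once that gap is closed (i.e.\ once Theorem~\ref{thm:stability} is available), the rest of your argument is sound and broadly parallels the paper's Section~\ref{sec:thm1}: the upper bound is the trivial inclusion $\theta(p)\le\theta_{\xi(p)}(p)$ plus stability, and the lower bound reduces to showing $\mathrm P_p[\Lambda_{\xi(p)}\text{ connected to }\infty]\ge c$, which you obtain via a renormalization or dyadic-annulus argument and which the paper obtains by showing $\mathrm P_{1-p}[\calC([0,n]\times[0,2n])]\le e^{-c_3 n/\xi(p)}$ directly from the definition of $\xi(p)$ and then gluing primal crossings at scales $2^k\xi(p)$ by FKG. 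One small further difference: the paper works with $\xi(p)$ throughout rather than routing through a separate crossing-probability length $L(p)$; this avoids the need to establish $L(p)\asymp\xi(p)$ separately, though your approach is also standard. In summary: the lower bound is essentially correct; the upper bound as you argue it is circular unless you explicitly invoke (and prove) the stability theorem.
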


The advantage of our approach, beside the obvious shortness of the exposition, 
is that it does not rely on interpretations (using Russo's formula) of the derivatives of probabilities of increasing events 
using so-called pivotal edges.
Indeed, such formulas are unavailable in most  dependent percolation models, which has long been a limitation to the extension of \cite{Kes87}.
As such, our approach is more robust, and is likely to extend to other models. 
For instance, the authors believe that similar arguments may provide shorter proofs of some (but not all) of the results of \cite{DM20}. 

The main difficulty in deriving Theorem~\ref{thm:1} is to obtain the stability of the one arm event probability below the correlation length.
This is the object of the next theorem.

\begin{theorem}[Stability below the correlation length]\label{thm:stability}
    There exist $C>0$ such that for every $p\ge p_c$ and every $N\le \xi(p)$,
    \begin{align}
    	\theta_N(p_c)\le \theta_N(p)\le C\,\theta_N(p_c)\label{eq:stability pi1}.
    \end{align}
\end{theorem}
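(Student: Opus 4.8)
The lower bound $\theta_N(p_c)\le\theta_N(p)$ is immediate from monotonicity of percolation in $p$, so the content is the upper bound. The plan is to compare the one-arm probability $\theta_N(p)$ with $\theta_N(p_c)$ by controlling the ratio $\theta_{2N}(p)/\theta_N(p)$, i.e.\ by analysing how the one-arm probability grows from scale $N$ to scale $2N$, and showing that below the correlation length this growth mimics the critical one up to multiplicative constants. The key quantity is the difference $\theta_N(p)-\theta_N(p_c)$, and the heart of the matter is a recursive inequality relating this difference at scale $2N$ to the difference at scale $N$, with an error term that can be summed because we stop at $N=\xi(p)$. Rather than using Russo's formula to differentiate $\theta_N$ in $p$, the idea is to couple $\mathrm P_{p_c}$ and $\mathrm P_p$ (standard monotone coupling, revealing edges one at a time) and to express the discrepancy between the two connection events in terms of crossing events in annuli, which are then estimated by RSW.

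Concretely, I would proceed as follows. First, establish a \emph{quasi-multiplicativity} statement for $\theta_\cdot(p)$ uniformly in $p\ge p_c$ below the correlation length: there is $c>0$ with $\theta_{2N}(p)\ge c\,\theta_N(p)\,\pi(N,2N;p)$ and a matching upper bound, where $\pi(N,2N;p)$ is the probability of crossing the annulus $\Lambda_{2N}\setminus\Lambda_N$; for $N\le\xi(p)$ the annulus-crossing probabilities should be comparable to those at $p_c$, which in turn are bounded away from $0$ and $1$ by RSW. The genuinely new step — and the main obstacle — is the comparison of $\theta_{2N}(p)$ with $\theta_{2N}(p_c)$ given the comparison at scale $N$. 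Here the plan is to write, in the monotone coupling, the event that $0$ reaches $\partial\Lambda_{2N}$ in $\omega_p$ but not in $\omega_{p_c}$, and to show this event forces the existence of a $p$-open arm from $0$ to $\partial\Lambda_{2N}$ together with a ``pivotal-like'' structure: a $p$-closed (hence at the coupling level, $\omega_p$-open-but-$\omega_{p_c}$-closed) edge whose opening created a connection, attached to two disjoint arms. Counting such configurations and using that the density of edges that are $p$-open but $p_c$-closed is $p-p_c$, one gets
\begin{align*}
\theta_{2N}(p)-\theta_{2N}(p_c)\ \lesssim\ (p-p_c)\sum_{N\le 2^k\le 2N}(\text{arm events at scale }2^k),
\end{align*}
which, using standard arm-exponent bounds (the one-arm and the four-arm, or a two-arm surrogate, all controlled by RSW and quasi-multiplicativity) and the definition of $\xi(p)$, telescopes to give $\theta_{2N}(p)\le C'\,\theta_{2N}(p_c)$ provided $2N\le\xi(p)$.

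The induction is then run on dyadic scales $N=2^k$: at scale $1$ the bound is trivial (both probabilities are $1$), and the recursive inequality above, combined with quasi-multiplicativity, propagates a bound of the form $\theta_N(p)\le C_k\,\theta_N(p_c)$ with $C_k$ uniformly bounded for all $k$ with $2^k\le\xi(p)$ — the point being that the increments $(p-p_c)\times(\text{arm terms})$, when normalised by $\theta_N(p_c)$, sum to something of order $1$ precisely because one stops at the correlation length; this last summability is where the definition $\xi(p)\asymp|p-p_c|^{-\nu}$ (or rather the a priori bound that $(p-p_c)\xi(p)^2\theta_{\xi(p)}$-type quantities are controlled, which follows from the exponential decay defining $\xi$) is used. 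General $N$ (not a power of $2$) follows by monotonicity and comparing $\theta_N$ with $\theta_{2^k}$ for the nearest dyadic scale. I expect the combinatorial step identifying the obstruction to connection in the coupling — making precise the ``extra open edge plus disjoint arms'' picture without invoking Russo's formula, and obtaining a clean summable bound — to be the crux; everything else is RSW plus bookkeeping.
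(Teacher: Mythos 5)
Your plan diverges from the paper's argument at the key step, and the divergence conceals a genuine gap. The recursive inequality you propose,
\[
\theta_{2N}(p)-\theta_{2N}(p_c)\ \lesssim\ (p-p_c)\sum_{N\le 2^k\le 2N}(\text{arm events at scale }2^k),
\]
is a coupling-language reformulation of Russo's formula: summing ``$p$-open-but-$p_c$-closed edge attached to a pivotal structure'' over edges and then extracting the factor $(p-p_c)$ from the density of such edges is precisely the integrated Russo inequality. This is exactly the mechanism the paper is built to avoid. More importantly, once you have that inequality you are stuck: to conclude $\theta_N(p)\le C\theta_N(p_c)$ you need, after normalising, a bound of the shape $(p-p_c)\,N^2\,\pi_4(N)\le \eps$ uniformly for $N\le\xi(p)$. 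This does \emph{not} ``follow from the exponential decay defining $\xi$'': the exponential decay tells you that $\theta_N(p)-\theta(p)$ is small for $N\gg\xi(p)$, which says nothing about the size of $(p-p_c)N^2\pi_4(N)$ for $N\le\xi(p)$. The statement $(p-p_c)\,\xi(p)^2\,\pi_4(\xi(p))\asymp 1$ is itself one of Kesten's scaling relations and requires a substantial separate argument. As written, your induction does not close; you have effectively moved the entire difficulty into an unproved a priori estimate.

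The paper's proof is structured to sidestep this. Instead of comparing $\theta_N$ at two parameters by counting pivotals weighted by $(p-p_c)$, it introduces the quantity $\Delta_n(p,p')=\mathbf P[\omega_{p'}\in\calC(H_n),\ \omega_p\notin\calC(V_n)]$ and proves two parameter-free comparisons: (i) if $\Delta_N(p,p')$ is small then $\Delta_n(p,p')\lesssim(n/N)^{c_1}$ for $n\ll N$ (Proposition~\ref{prop:comparison scale Delta}, proved via the ``pivotal switch'' construction, which gives a branching-type recursion $\Delta_N\ge 1-\mathbf E[(1-\Delta_n)^{\mathbf X}]$ with $\mathbf E[\mathbf X]\ge c(N/n)^c$); and (ii) $\mathbf P[(\omega_p,\omega_{p'})\in\calE_n]\le C_0\Delta_{4n}(p,p')$ (Proposition~\ref{prop:comparison delta Delta}, an arm-separation lemma). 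Stability is then derived by chopping the \emph{parameter interval} $[p_c,p]$ into $k$ pieces on which $\Delta_\xi(p_i,p_{i+1})\le c_1$, applying~(i) to each, and union-bounding; this yields $\mathbf P[(\omega_{p_c},\omega_p)\in\calE_n]\lesssim(n/\xi)^{c_1}$ with \emph{no} appearance of $(p-p_c)$ or $\pi_4$. Notice that the induction in the paper runs over sub-intervals of $[p_c,p]$, not over dyadic spatial scales as in your plan; this is what lets the argument avoid ever equating a derivative with a pivotal density. You should look at Section~\ref{sec:pivotal_switches} to see how the scale comparison for $\Delta$ replaces the need for the four-arm input, and at the proof of Theorem~\ref{thm:stability} in Section~2 to see how the decomposition over mixed-pivotal scales (the ``smallest scale of a mixed pivotal'' trick) is combined with quasi-multiplicativity of the one-arm probability to close the argument without any telescoping over $p-p_c$.
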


Our technique also enables one to derive the other scaling relations of \cite{Kes87}, as the key difficulty is always a stability result similar to Theorem~\ref{thm:stability}; see also Remark~\ref{rem:stability4arm} for additional details.
We choose to focus on \eqref{eq:aa1} to simplify the exposition.
We refer to the original paper and to the review \cite{Nol08} for details and encourage the reader --  as an interesting exercise -- to prove the other relations using the techniques developed here. 
Finally, a full description of the near-critical phase was provided in \cite{GPS}. 
In \cite[Lemma~8.4]{GPS}, the authors propose an alternative approach to proving stability, 
based on the poissonian way in which edges open when the percolation parameter increases. 
Note nevertheless that this approach only proves stability under a scale explicitly computed using the density of pivotals at criticality; which is smaller, but not apriori equal to the correlation length. 

\paragraph{Organization of the paper} In the next section we present the proof of Theorem~\ref{thm:stability} subject to two important propositions that we derive in the following two sections. Theorem~\ref{thm:1} is derived from Theorem~\ref{thm:stability} in Section~\ref{sec:thm1}. 
In order to make this paper essentially self-contained, the appendix contains the proofs of three required (and standard) consequences of the classical Russo-Seymour-Welsh (RSW) inequality \eqref{eq:RSW}. 

While most of our proofs are streamlined, our essential innovation lies in Proposition~\ref{prop:comparison scale Delta}, 
specifically in the definition and use of pivotal switches in Section~\ref{sec:pivotal_switches}.

\paragraph{Some general notation}
We use the canonical increasing coupling  between the percolation measures for different parameters $p$. Let $\mathbf P$ be the product  measure on $[0,1]^{E(\mathbb Z^2)}$ defined as the law of i.i.d uniform random variables on $[0,1]$. For every $p\in[0,1]$, define the  canonical map $\omega_p$ from $ [0,1]^{E(\mathbb Z^2)}$ to $\{0,1\}^{E(\mathbb Z^2)}$ by  setting $\omega_p(e)=\omega_p(e,U):=\mathbf 1_{U(e)\le p}$ for every edge $e$. This way, under $\mathbf P$, each  configuration  $\omega_p$ has law $\mathrm P_p$ and the configurations are naturally ordered almost surely.

Introduce the dual square lattice with vertex-set $(\tfrac12,\tfrac12)+\mathbb Z^2$ and edges between nearest neighbours. For an edge $e$ of the square lattice, let $e^\star$ be the associated dual edge intersecting it in its middle. On the dual square lattice, define the dual configuration $\omega^\star$ associated to $\omega$ by the formula $\omega^\star(e^\star)=1-\omega(e)$. As for $\omega$, $\omega^\star$ is understood as a subgraph of the dual square lattice, and paths in $\omega^\star$ are identified with continuous, piecewise linear paths drawn in the plane.
Observe that if $\omega$ has law $\mathrm P_p$, then $\omega^\star(e^\star)$ are i.i.d.~Bernoulli random variables of parameter $1-p$. 

For a rectangle $R:=[a,b]\times[c,d]$, let $\mathcal C(R)$ denote the event that $R$ is crossed by a path in $\omega$ from left to right. 

\paragraph{Acknowledgments} 
This project has received funding from the European Research Council (ERC) under the European Union's Horizon 2020 research and innovation programme (grant agreements No.~757296 and No.~851565). The authors acknowledge funding from the NCCR SwissMap and the Swiss FNS.

\section{Proof of Theorem~\ref{thm:stability}}

The proof will rely on two propositions involving the quantity $\Delta_n(p,p')$ that we now introduce. 
For $n\ge1$ and $p'\ge p\ge p_c$, set
\begin{align}\label{eq:4}
	\Delta_n(p,p')&:=\mathbf P[\omega_{p'}\in \mathcal C(H_n),\,\omega_p\notin \mathcal C(V_n)],
\end{align}
where $H_n:=[-2n,2n]\times[-n,n]$ and $V_n:=[-n,n]\times[-2n,2n]$.
Notice that the event $\calC(H_n)$ is contained in $\calC(V_n)$ and thus $\Delta_n(p,p) = 0$.
For $p < p'$, $\Delta_n(p,p')$ will be shown to encode how crossing probabilities at scale $n$ increase when the parameter increases from $p$ to $p'$. 

Write $\calE_n$ for the set of pairs of configurations $(\omega,\omega')$ with $\omega \le \omega'$ 
and such that there exists $e\in \Lambda_n$ 
and two {\em disjoint} paths $\gamma$ in $\omega'$ and $\gamma^\star$ in $\omega^\star$ 
that run from $\partial\Lambda_{2n}$ to itself and intersect only at the center of $e$. 
Call an edge as above a {\em mixed pivotal} at scale~$n$. 

The first proposition compares $\Delta_n(p,p')$ for different values of $n$.
This is the main innovation of our argument. 

\begin{proposition}\label{prop:comparison scale Delta}
    There exist constants $c_1,C_1>0$ such that for every $p'\ge p\ge p_c$ and every $16 n\le N\le \xi(p')$,
    \begin{equation*}
	    \Delta_N(p,p')\le c_1\qquad\Longrightarrow \qquad \Delta_n(p,p')\le C_1(\tfrac nN)^{c_1}.
    \end{equation*}
\end{proposition}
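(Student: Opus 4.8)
The plan is to reduce the proposition to a one-step comparison between scales that differ by a large fixed factor, and to iterate it. Concretely, I would prove that there are a large integer $K\ge16$ and a constant $\rho\in(0,1)$ such that
\[
\Delta_N(p,p')\le c_1\ \text{ and }\ Km\le N\le\xi(p')\quad\Longrightarrow\quad\Delta_m(p,p')\le\rho\,\Delta_{Km}(p,p').
\]
Since the hypothesis bears on $\Delta_N$ for the $N$ fixed in the statement, this may be applied successively at the scales $N/K,N/K^2,\dots$, giving $\Delta_{N/K^j}(p,p')\le\rho^j\,\Delta_N(p,p')$ as long as $N/K^j\ge n$; the remaining gap of at most a factor $K$ between the smallest such scale and $n$ is then closed by the elementary estimate $\Delta_m(p,p')\le C(K)\,\Delta_{Km}(p,p')$. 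This last bound is a routine RSW gluing: on the event defining $\Delta_m$, one extends the primal $\omega_{p'}$-crossing of $H_m$, and in the dual configuration $\omega_p^\star$ the vertical crossing of $V_m$ provided by $\omega_p\notin\calC(V_m)$, to crossings at scale $Km$, one dyadic step at a time, each step using disjoint edges and succeeding with probability bounded below because $Km\le\xi(p')\le\xi(p)$ keeps both parameters below the correlation length. Altogether this yields $\Delta_n(p,p')\le C\,\rho^{\log_K(N/n)}=C\,(n/N)^{\log_K(1/\rho)}$, which is the assertion after renaming constants.

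The content is therefore the one-step comparison, and this is where the mixed-pivotal event $\calE$ and the pivotal switches of Section~\ref{sec:pivotal_switches} enter. The first step is that, up to bounded changes of scale, a discrepancy is the same thing as a mixed pivotal. If $\omega_{p'}\in\calC(H_m)$ while $\omega_p\notin\calC(V_m)$ then, since $\calC(H_m)\subset\calC(V_m)$, the configuration $\omega_{p'}$ contains a left--right crossing of $V_m$, while planar duality provides a top--bottom crossing of $V_m$ in $\omega_p^\star$; two such crossings must meet, at an edge $e$ that is open in $\omega_{p'}$ and closed in $\omega_p$, from whose centre a primal $\omega_{p'}$-path and a dual $\omega_p^\star$-path run, otherwise disjointly, out to distance of order $m$, so that $(\omega_p,\omega_{p'})\in\calE_{O(m)}$. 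Conversely, given a pair in $\calE_{Km}$, an RSW construction around the pivotal edge---valid since $Km\le\xi(p')$---extends its four arms to a genuine discrepancy at scale $O(Km)$ with conditional probability bounded below, so that $\mathbf P[(\omega_p,\omega_{p'})\in\calE_{Km}]\le C\,\Delta_{O(Km)}(p,p')$. Hence the one-step comparison reduces to the quasi-multiplicativity estimate
\[
\mathbf P[(\omega_p,\omega_{p'})\in\calE_m]\ \le\ \rho\,\mathbf P[(\omega_p,\omega_{p'})\in\calE_{Km}].
\]

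Proving this estimate without Russo's formula---hence without expanding $\mathbf P[\calE_n]$ as a sum over pivotal edges---is the main obstacle, and the place where pivotal switches are used. Heuristically the gain $\rho<1$ comes from a mixed pivotal at scale $Km$ being allowed at order $K^2$ more locations than one at scale $m$, while pushing its four arms from scale $m$ out to scale $Km$ only costs a four-arm factor; since four-arm probabilities in a modulus-$K$ annulus decay polynomially in $K$ but strictly more slowly than $K^{-2}$, the product of $K^2$ with that factor is a growing power of $K$, which is $1/\rho$. (These last RSW facts---polynomial decay, and a lower bound of the form $K^{-2+\eta}$ for four arms below the correlation length---are among the standard consequences proved in the appendix.) The structural difficulty is that the two colours defining a mixed pivotal, being $\omega_{p'}$-open and being $\omega_p$-closed, are not complementary, since an edge with $U(e)\in(p,p']$ carries both; consequently the colour-switching and arm-separation lemmas that usually yield quasi-multiplicativity of four-arm events do not apply verbatim. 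The pivotal switch resolves this: starting from a configuration realising $\calE_{Km}$ whose pivotal edge lies deep inside $\Lambda_m$, one performs a bounded, explicit local modification near that edge that at once displays a mixed pivotal at scale $m$ and an ordinary (single-colour) four-arm configuration in the annulus between scales $m$ and $Km$; one then verifies that every configuration in $\calE_m$ arises in this way, with bounded multiplicity and bounded loss of probability, the freedom in the position and in the four-arm configuration producing the gain. The smallness assumption $\Delta_N\le c_1$ enters precisely here: through the elementary bound it forces $\Delta$, hence the density of mixed pivotals, to stay small at every scale below $N$, which is what keeps the arms well separated and makes the switch argument valid. I expect the careful definition of this switch---well defined, reversible up to bounded cost, and compatible with $\omega_p\le\omega_{p'}$---to be the technical heart of the argument; once it is in place, only RSW crossing bounds below the correlation length are needed to finish.
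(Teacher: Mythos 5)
Your architecture --- reduce to a one-step comparison between widely separated scales and iterate it, with a final RSW gluing to close the remaining gap of a bounded factor --- matches the paper's, which iterates Lemma~\ref{lem:5} with a fixed large ratio $R$. Your heuristic for why the one-step gain $\rho<1$ occurs (order $K^2$ more possible pivotal locations at the larger scale, multiplied by a four-arm-type cost that decays strictly slower than $K^{-2}$) is also exactly the content of Claim~\ref{claim:lower bound}. So the skeleton and the intuition are right.

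The gap is in the mechanism that turns the pivotal-count heuristic into the one-step inequality, which is precisely the part you flag as the ``technical heart'' but leave as a surgery sketch. You propose to prove a quasi-multiplicativity bound $\mathbf P[\calE_m]\le\rho\,\mathbf P[\calE_{Km}]$ by a local modification near the pivotal edge followed by a multiplicity/probability-loss accounting, and you correctly note that such a surgery runs into arm-separation and colour-switching problems because a mixed pivotal is not an ordinary four-arm event. But the paper does not attempt that surgery at all, and avoids those obstructions entirely. Its route is different in kind: the pivotal switch is not a local modification near an edge, it is a \emph{macroscopic super-edge} --- a box $\mathbf Q_x$ of size $n$ whose state (open/closed) is whether its two bounding primal arcs are connected inside it. Conditioning on the $\sigma$-algebra $\calF$ of everything outside the boxes together with the open/closed status of each box at parameter $p$, the boxes become conditionally independent, each $p$-closed pivotal switch flips to $p'$-open with probability at least $\Delta_n$, and if any one flips the event defining $\Delta_N$ occurs. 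This yields the branching-type bound of Claim~\ref{lem:recursive relation},
\begin{equation*}
\Delta_N \ge 1 - \mathbf E\bigl[(1-\Delta_n)^{\mathbf X}\bigr],
\end{equation*}
which, combined with the polynomial lower bound $\mathbf E[\mathbf X]\ge c(N/n)^c$ of Claim~\ref{claim:lower bound} and the trivial upper bound $\mathbf X\le(N/n)^2$, gives Lemma~\ref{lem:5}. No colour switching, arm separation at scale ratio $K$, or injectivity/multiplicity bookkeeping is required; the gain comes from the expectation bound alone. A consequence you should note is that your explanation for why the hypothesis $\Delta_N\le c_1$ is needed is off: it plays no role in keeping arms separated (Proposition~\ref{prop:comparison delta Delta} holds with no smallness hypothesis), but is needed so that the recursion does not saturate --- the branching-type bound caps out because $\mathbf X\le(N/n)^2$, which is the origin of the $\min\{\Delta_n,(n/N)^2\}$ in Lemma~\ref{lem:5}, and the smallness of $\Delta_N$ is what guarantees the $\min$ is always realised by $\Delta_n$ as one iterates downward.
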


In words, if crossing probabilities do not vary too much at some scale $N$ as the parameter increases from $p$ to $p'$, 
then they do not vary at all at smaller scales $n \ll N$.  
 
The second proposition relates $\Delta_n(p,p')$ and the probability that $(\omega_p,\omega_{p'}) \in\calE_n$. 

\begin{proposition}\label{prop:comparison delta Delta}
	There exists $C_0>0$ such that for every $p'\ge p\ge p_c$ and every $n\le\xi(p')$,
	\begin{equation*}
		\Delta_{n}(p,p') \le \mathbf P[(\omega_p,\omega_{p'})\in\calE_n]\le C_0\,\Delta_{4n}(p,p').
	\end{equation*}
\end{proposition}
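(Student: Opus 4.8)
The plan is to prove the two inequalities separately, the lower bound being essentially tautological and the upper bound requiring a gluing argument of RSW type.

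\medskip
\noindent\textbf{Lower bound.}
The first step is to show that on the event $\{\omega_{p'}\in\calC(H_n),\ \omega_p\notin\calC(V_n)\}$, the pair $(\omega_p,\omega_{p'})$ lies in $\calE_n$. Indeed, if $\omega_{p'}$ crosses $H_n=[-2n,2n]\times[-n,n]$ horizontally, there is a path $\gamma$ in $\omega_{p'}$ from the left to the right side of $H_n$, hence from $\partial\Lambda_{2n}$ to itself inside $\Lambda_{2n}$. If $\omega_p$ does not cross $V_n=[-n,n]\times[-2n,2n]$ vertically, then by planar duality $\omega_p^\star$ has a horizontal dual crossing of $V_n$, i.e.\ a dual path $\gamma^\star$ from $\partial\Lambda_{2n}$ to itself separating the top and bottom of $\Lambda_{2n}$. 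Since $\gamma\subset\omega_{p'}$ goes left--right and $\gamma^\star\subset\omega_p^\star\subset\omega_{p'}^\star$ goes top--bottom (inside the relevant box), they must meet; as $\gamma$ is a primal path and $\gamma^\star$ a dual path, they can only cross at the midpoint of some edge $e$, and that edge is open in $\omega_{p'}$ (it carries $\gamma$) and its dual is open in $\omega_p^\star$ (it carries $\gamma^\star$). One should take $\gamma$ and $\gamma^\star$ to be the portions of these crossings between their last visits to $\partial\Lambda_{2n}$ on either side of the intersection point, making them disjoint except at the center of $e$, and note $e\in\Lambda_n$ after choosing crossings realized inside $H_n\cap V_n=\Lambda_n$. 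This yields $\Delta_n(p,p')\le\mathbf P[(\omega_p,\omega_{p'})\in\calE_n]$.

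\medskip
\noindent\textbf{Upper bound.}
Here I would start from a pair $(\omega_p,\omega_{p'})\in\calE_n$ witnessed by a mixed pivotal $e\in\Lambda_n$ with its primal arm $\gamma$ (in $\omega_{p'}$) and dual arm $\gamma^\star$ (in $\omega_p^\star$) crossing the annulus $\Lambda_{2n}\setminus\{e\}$. The goal is to produce, with probability bounded below by a constant, a horizontal crossing of $H_{4n}$ in $\omega_{p'}$ together with a dual horizontal crossing of $V_{4n}$ in $\omega_p^\star$ (equivalently $\omega_p\notin\calC(V_{4n})$), by gluing fresh crossings in the annulus $\Lambda_{8n}\setminus\Lambda_{2n}$ to the two arms. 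Concretely: the primal arm $\gamma$ reaches two points of $\partial\Lambda_{2n}$; extend each of these, through $\omega_{p'}$-paths in suitable translated $O(n)\times O(n)$ rectangles and circuits in dyadic annuli, to connect to the left and right sides of $\Lambda_{8n}$, producing a left--right crossing of $H_{4n}=[-8n,8n]\times[-4n,4n]$. Symmetrically, the dual arm $\gamma^\star$ reaches two points of $\partial\Lambda_{2n}$; extend it through $\omega_p^\star$-paths to the top and bottom of $\Lambda_{8n}$, producing a top--bottom dual crossing of $V_{4n}=[-4n,4n]\times[-8n,8n]$, which forbids $\omega_p\in\calC(V_{4n})$. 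The events used for the extension live in the region $\Lambda_{8n}\setminus\Lambda_{2n}$, hence on edges disjoint from those determining $\gamma$, $\gamma^\star$ and the pivotal $e$; by independence and the FKG inequality, together with the RSW estimates of Appendix~\ref{sec:lem} (valid up to scale $\xi(p')$, which is why the hypothesis $4n\le\xi(p')$, i.e.\ $n\le\xi(p')/4$, or rather $n\le\xi(p')$ as stated, is invoked), the conditional probability of successfully realizing all the extension events given the configuration inside $\Lambda_{2n}$ is at least some universal constant $1/C_0$. Averaging over $(\omega_p,\omega_{p'})\in\calE_n$ gives $\mathbf P[(\omega_p,\omega_{p'})\in\calE_n]\le C_0\,\mathbf P[\omega_{p'}\in\calC(H_{4n}),\ \omega_p\notin\calC(V_{4n})]=C_0\,\Delta_{4n}(p,p')$.

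\medskip
\noindent\textbf{Main obstacle.}
The delicate point is the upper bound: one must extend \emph{two} arms of \emph{opposite type} (a primal arm of $\omega_{p'}$ and a dual arm of $\omega_p$) simultaneously, in the \emph{same} annulus, without the primal extension interfering with the dual extension. Since primal connections in $\omega_{p'}$ and dual connections in $\omega_p^\star$ are both increasing in the same collection of uniform variables only up to the sign, one cannot simply apply FKG to the pair directly; instead the extensions for $\gamma$ and for $\gamma^\star$ must be routed through \emph{disjoint} sub-annuli or disjoint angular sectors of $\Lambda_{8n}\setminus\Lambda_{2n}$, so that they depend on disjoint edge sets and each is controlled by an RSW/FKG argument for a single parameter. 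Making this geometric separation precise --- i.e.\ checking that the four arm endpoints on $\partial\Lambda_{2n}$ can always be connected to the four sides of $\Lambda_{8n}$ along non-overlapping corridors regardless of their positions --- is the technical heart of the argument; everything else is a routine application of the standard RSW consequences gathered in the appendix.
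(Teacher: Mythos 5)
Your lower bound is fine and matches what the paper does (it is essentially a one-line observation; your topological justification is correct, though more detail than needed).

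The upper bound, however, has a genuine gap, and it is exactly the point you flag at the end as ``the technical heart'' and then leave unresolved. The claim that the four arm endpoints on $\partial\Lambda_{2n}$ ``can always be connected to the four sides of $\Lambda_{8n}$ along non-overlapping corridors regardless of their positions'' is simply false: nothing in the definition of $\calE_n$ prevents the four endpoints (two primal, two dual) from clustering in a neighbourhood of a single point of $\partial\Lambda_{2n}$. In that degenerate configuration there is no universal lower bound on the probability of extending the four arms through disjoint corridors; the cost degrades with the separation of the endpoints, so the constant $1/C_0$ you posit does not exist, and averaging over $\calE_n$ does not give the inequality. This is the usual difficulty with arm events without a priori separation, and it is the reason the paper's proof is not a one-shot gluing. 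The paper instead (i) defines a ``bad'' event $\calB_n$ built from mixed three-arm events across a ladder of scales $\eta_1 n, \dots, \eta_8 n$, and proves $\mathbf P[\calB_n]$ is small uniformly in $p,p',n$ (Lemma~\ref{lem:2}, using Reimer's inequality via Claim~\ref{claim:1}); (ii) proves the deterministic Claim~\ref{claim:2} that on $\calE_n \setminus \calB_n$ four well-separated endpoints $y_1,\dots,y_4$ can be found, after which the gluing you describe does work at constant cost (Lemma~\ref{lem:3}); and (iii) handles $\calE_n \cap \calB_n$ by noting that $\calE_n$ forces a translate of $\calE_{\lfloor n/2\rfloor}$, leading to the recursion $r_n \le C_3 + \tfrac12 r_{\lfloor n/2 \rfloor}$ for $r_n := \delta_n/\Delta_{4n}$, with Lemma~\ref{lem:1} supplying the needed comparability of $\Delta_{4n}$ and $\Delta_{4\lfloor n/2\rfloor}$. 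Steps (i) and (iii) are entirely absent from your plan, and without them the argument does not close. In short: you correctly identified where the difficulty is, but the resolution is not a purely geometric routing lemma — it is a probabilistic separation-of-arms estimate combined with a multi-scale induction, and that is most of the content of the paper's proof.
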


One can think of the proposition above as an analogue of the results relating pivotality for different events, which are central in \cite{Kes87}. Indeed, the first inequality is immediate, as the event in the definition of $\Delta_{n}(p,p')$ induces the existence of a mixed pivotal at scale~$n$. The second inequality should be understood as stating that a mixed pivotal may be transformed with positive probability to produce the event in the definition of $\Delta_n(p,p')$.
This result is a technical, but fairly standard consequence of the box-crossing property \eqref{eq:BXP}.

We now give the proof of the stability theorem based on the two propositions above.

\begin{proof}[Theorem~\ref{thm:stability}]
  Fix $p\ge p_c$ and write $\xi=\xi(p)$ for the correlation length at $p$. The first inequality $\theta_N(p_c)\le \theta_N(p)$ in \eqref{eq:stability pi1} is a consequence of monotonicity, hence we can focus on the second inequality $\theta_N(p)\le C \theta_N(p_c)$.

Our first step is to apply Proposition~\ref{prop:comparison scale Delta} to $N=\xi$ and prove that the crossing probabilities at scales $n\ll \xi$ are stable.
One cannot directly apply the proposition to the parameters $p_c$ and $p$ since $\Delta_\xi(p_c,p)$ is not particularly small. 
Nevertheless, one can easily circumvent this difficulty by decomposing the interval $[p_c,p]$ into smaller intervals, where Proposition~\ref{prop:comparison scale Delta} applies. We do so now.

Let $k\ge\frac1{c_1}$, where $c_1>0$ is  given by  Proposition~\ref{prop:comparison scale Delta}. 
Using that the function $f(q)= \mathrm P_{q}[\mathcal C(V_{\xi})]$ is nondecreasing and continuous in $q\in [p_c,p]$, 
we may choose $p_c=p_0\le p_1\le \dots\le p_k=p$ such that for every $0\le i<k$, 
$$f(p_{i+1})-f(p_i)=\frac1k [f(p)-f(p_c)]\le c_1.$$  
Since $\Delta_{\xi}(p_i,p_{i+1})\le f(p_{i+1})-f(p_i)\le c_1$, 
Proposition~\ref{prop:comparison scale Delta} applied to $N=\xi$ implies that 
\begin{equation*}
	\forall n\le \tfrac{1}{16}\xi,\qquad  \Delta_n(p_i,p_{i+1})\le C_1(\tfrac{n}{\xi})^{c_1}.
\end{equation*}
Proposition~\ref{prop:comparison delta Delta} thus gives that for every $0\le i<k$ and $n\le \tfrac1{16}\xi$, 
\begin{equation*}
	\mathbf P[(\omega_{p_i},\omega_{p_{i+1}})\in\calE_n]\le C_0C_1(\tfrac{4n}{\xi})^{c_1}.
\end{equation*}

Observe that $(\omega_{p_c},\omega_{p})\in\calE_n$ implies\footnote{To fully justify this fact, consider the smallest $q\in[p_c,p]$ such that $(\omega_{p_c},\omega_{q})\in\calE_n$; let $e$ be an edge such that $U(e)=q$. Then $(\omega_{p_i},\omega_{p_{i+1}})\in\calE_n$ for $i$ such that $p_i < q\le p_{i+1}$, with $e$ being the mixed pivotal.} 
the existence of $0\le i<k$ such that $(\omega_{p_i},\omega_{p_{i+1}})\in\calE_n$. 
Thus, for every $n\le \tfrac1{16}\xi$, using the inequality above for each $i$, we find 
\begin{equation}\label{eq:super}
	\mathbf P[(\omega_{p_c},\omega_{p})\in\calE_n]\le kC_0C_1(\tfrac{4n}{\xi})^{c_1}\le C_2(\tfrac{n}{\xi})^{c_1},
\end{equation}
where $C_2$ is a constant independent of $p$ and $n \leq \frac1{16}\xi$. 

Let us now fix $N\le \xi$, and let $c>0$ denote a small constant to be chosen later. 
For $0$ to be connected to $\partial\Lambda_N$ in $\omega_p$, either $0$ is already connected to $\partial\Lambda_{c N}$ in $\omega_{p_c}$, or the event  
\begin{equation*}
	\{0\text{ is connected to }\partial\Lambda_{n}\text{ in }\omega_p\}\cap \{(\omega_{p_c},\omega_p)\in \calE_n(x)\}\cap\{\Lambda_{8n}\text{ is connected to }\partial\Lambda_N\text{ in }\omega_p\}
\end{equation*}
 occurs for some scale $n \le cN$ of the form $n = 2^j$ with $j$ integer
 and some $x\in n\mathbb Z^2\cap \partial \Lambda_{3n}$, where $\calE_x(n)$ is the translate by $x$ of the event $\calE_n$.
Indeed, if $0$ is connected to $\partial\Lambda_N$ in $\omega_p$, but not to $\partial\Lambda_{c N}$ in $\omega_{p_c}$, consider a path contained in $\omega_p$, connecting $0$ to $\partial \Lambda_N$  and containing a minimal amount of edges not in $\omega_{p_c}$.  
Then any such edge is contained in $\La_{4n} \setminus \La_{2n}$ for some $n =2^j$ and is a mixed pivotal at scale $n$.
 
We deduce from the previous observation and independence that 
\begin{equation}
	\theta_N(p)\le \theta_{c N}(p_c)+\sum_{n=2^j\le c N} \theta_{n}(p)\mathbf P[(\omega_{p_c},\omega_{p})\in\calE_n]\theta_{8n,N}(p),
\end{equation}
where $\theta_{8n,N}(p)$ is the probability that $\Lambda_{8n}$ is connected to $\partial\Lambda_N$ in $\omega_p$.
 The quasi-multiplicativity of the one arm probability (Lemma~\ref{lem:quasi}) together with \eqref{eq:super} imply that
 \begin{equation}\label{eq:ahah}
	\theta_N(p)
	\le \theta_{c N}(p_c)+\!\!\!\!\!\underbrace{C_3\big(\tfrac{c N}{\xi}\big)^{c_1}}_{\le \frac12 \text{ if $c$ is small enough}}\!\!\!\!\!\!\!\!\theta_N(p).
\end{equation}
By choosing the constant $c$ small enough, we obtain $\theta_N(p)\le 2\theta_{cN}(p_c)$ and the result follows by applying the quasi-multiplicativity of the one arm probability (Lemma~\ref{lem:quasi}) again to compare $\theta_{cN}(p_c)$ to $\theta_N(p_c)$. This concludes the proof.
\end{proof} 

\begin{remark}\label{rem:stability4arm}
	Theorem~\ref{thm:stability} suffices to prove all scaling relations of \cite{Kes87} except for that linking the correlation length to the so-called four-arm probability. 
	For this final scaling relation, two elements are needed: the stability of the four arm probability under the correlation length 
	and linking $\Delta_N(p,p')$ to the four arm probability.
	The first may be deduced from~\eqref{eq:super} using the same argument as above. 
	Indeed, for a four arm event at distance $n$ to appear or disappear between $\omega_{p_c}$ and $\omega_p$, an event of the type $\calE_{2^j}$ with $j \leq \log_2n$ needs to occur. 
	The second ingredient is essentially equivalent to Russo's formula, and we will not detail it here. 
\end{remark}

\section{Proof of Proposition~\ref{prop:comparison scale Delta}}\label{sec:pivotal_switches}

The proof relies on the following induction relation on $\Delta_n(p,p')$ (a similar induction relation was used in \cite{DMT20} for a different quantity).

\begin{lemma}\label{lem:5}
	There exists a constant $c>0$ such that for every $p'\ge p \ge p_c$ and every $n,N\le \xi(p)$ satisfying $16 n\le N$,  we have
	\begin{equation}\label{eq:5}
		\Delta_N(p,p')\ge c  \left(\tfrac N n\right)^c\min\{\Delta_n(p,p'),(\tfrac nN)^2\}.
	\end{equation}
\end{lemma}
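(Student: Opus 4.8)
The plan is to prove the inequality by \emph{constructing} a scale‑$N$ discrepancy out of a scale‑$n$ one. The starting point is that the event $\{\omega_{p'}\in\mathcal C(H_n),\ \omega_p\notin\mathcal C(V_n)\}$ defining $\Delta_n(p,p')$ forces, inside $\Lambda_{2n}$, a primal $\omega_{p'}$‑path and a dual $\omega_p^\star$‑path that must cross; equivalently it forces a \emph{mixed pivotal} edge at scale $n$ (of the kind appearing in the definition of $\calE_n$), with two primal $\omega_{p'}$‑arms and two dual $\omega_p^\star$‑arms joining it to $\partial\Lambda_{2n}$. Conversely, such a local four‑arm picture at any edge well inside $\Lambda_N$, prolonged out to scale $N$ and then closed up by a bounded number of Russo--Seymour--Welsh crossings, produces a configuration realizing the event defining $\Delta_N(p,p')$. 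Hence $\Delta_N$ will be bounded below by (a combinatorial gain: the seed may sit at any of $\asymp(N/n)^2$ disjoint locations) times (the cost of prolonging a mixed four‑arm from scale $n$ to scale $N$).

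For the combinatorial part, tile $\Lambda_{N/2}$ by $M\asymp(N/n)^2$ boxes $\Lambda_{2n}(x)$, $x\in 5n\mathbb Z^2$, which are pairwise disjoint. The translated discrepancy events (translates of $\{\omega_{p'}\in\mathcal C(H_n),\ \omega_p\notin\mathcal C(V_n)\}$, each of probability $\Delta_n(p,p')$ and measurable with respect to $(U(e))_{e\in\Lambda_{2n}(x)}$ since $H_n\cup V_n\subseteq\Lambda_{2n}$) are then independent under $\mathbf P$, so
\[
\mathbf P\big[\text{some }\Lambda_{2n}(x)\text{ carries a scale-}n\text{ discrepancy}\big]\ =\ 1-(1-\Delta_n)^{M}\ \gtrsim\ \min\{M\Delta_n,1\}\ \asymp\ (N/n)^2\min\{\Delta_n(p,p'),(n/N)^2\}.
\]
This already explains both the polynomial factor and the truncation at $(n/N)^2$: the gain from having many candidate locations saturates once $\Delta_n\gtrsim(n/N)^2$.

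Condition now on the (say, lexicographically first) box $\Lambda_{2n}(x_0)$ carrying a discrepancy, together with two primal $\omega_{p'}$‑arms and two dual $\omega_p^\star$‑arms realizing the associated mixed pivotal, explored from $\partial\Lambda_{2n}(x_0)$ inwards. It remains to \emph{prolong} this mixed four‑arm from $\partial\Lambda_{2n}(x_0)$ out to $\partial\Lambda_N(x_0)$ and, using $\Lambda_N(x_0)\subseteq\Lambda_{2N}$, to close everything up by a bounded number of aspect‑ratio‑bounded crossings into a primal $\omega_{p'}$‑crossing of $H_N$ together with a dual $\omega_p^\star$‑crossing of $V_N$ separating its two sides — the two being compatible because a primal and a dual path of the \emph{same} configuration never cross, so the only crossing point is the mixed pivotal edge itself (away from it the primal crossing is built in $\omega_p$, hence a path of $\omega_p$). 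The prolongation is carried out scale by scale on the disjoint dyadic annuli $\Lambda_{2^{j+1}n}(x_0)\setminus\Lambda_{2^{j}n}(x_0)$: on each we ask for two primal $\omega_p$‑arms and two dual $\omega_p^\star$‑arms in alternating angular order (the primal ones automatically present also in $\omega_{p'}$, the dual ones sought directly in $\omega_p^\star$), grafted onto what was built at the previous scale. Since $n,N\le\xi(p)$ and $p\ge p_c$, the box‑crossing property \eqref{eq:BXP} gives a uniform lower bound at every scale $\le N$ for both primal $\omega_p$‑crossings and dual $\omega_p^\star$‑crossings of bounded aspect ratio; so each annulus contributes a factor bounded below by a constant times the four‑arm probability at criticality across it, and by quasi‑multiplicativity of the four‑arm event the whole prolongation costs at least $c\,\pi_4(n,N)$. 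It is exactly here that the new ingredient — \emph{pivotal switches} — enters: instead of appealing to Russo's formula, one produces and re‑attaches these arms by local modifications at the pivotal edge.

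Finally, using the robust lower bound $\pi_4(n,N)\ge c\,(n/N)^{2-\eps_0}$ for a fixed $\eps_0>0$ — that is, the four‑arm exponent is bounded strictly below $2$; the ``five arms fill the area'' comparison already gives $\pi_4(n,N)\ge c\,(n/N)^2$, and a standard RSW argument improves the exponent by a fixed amount — we conclude
\[
\Delta_N(p,p')\ \gtrsim\ (N/n)^2\min\{\Delta_n(p,p'),(n/N)^2\}\cdot\pi_4(n,N)\ \gtrsim\ (N/n)^{\eps_0}\,\min\{\Delta_n(p,p'),(n/N)^2\},
\]
which is the assertion with $c=\eps_0$ after adjusting constants. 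I expect the genuinely hard step to be the prolongation/surgery of the previous paragraph: making rigorous the conditioning on the random seed location $x_0$, and grafting fresh mixed arms onto the already‑revealed scale‑$n$ arms without disturbing the discrepancy they witness, is precisely the point at which this argument must do — by hand — the work that differential inequalities do in Kesten's proof.
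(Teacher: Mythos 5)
Your proposal aims at the same estimate as the paper, $\Delta_N \gtrsim (N/n)^2\,\pi_4(n,N)\,\min\{\Delta_n,(n/N)^2\}$, and your arithmetic is consistent with the paper's. But the route you take --- sample a scale-$n$ discrepancy box, then prolong its mixed four-arm out to scale $N$ --- is precisely the one that the ``pivotal switch'' construction is designed to avoid, and the gap you acknowledge at the end is the crux, not a technicality. When you condition on the first box $\Lambda_{2n}(x_0)$ carrying a discrepancy, the endpoints of the two $\omega_{p'}$-arms and two $\omega_p^\star$-arms on $\partial\Lambda_{2n}(x_0)$ are revealed, and nothing forces them to be well separated; before any scale-by-scale grafting can start you would need a full arm-separation lemma at scale $n$. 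On top of that, the conditioning on ``$x_0$ is the first discrepancy box'' biases the configuration in the annulus $\Lambda_N(x_0)\setminus\Lambda_{2n}(x_0)$, which overlaps the other tiled boxes. Closing this gap amounts to redoing the Kesten--Nolin arm-separation machinery, which is exactly the technology the paper sets out to replace.

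The paper instead inverts the order of operations. A \emph{switch} at $x$ is a box of scale $n$ bounded by four extremal paths in $\omega_p$ and $\omega_p^\star$ (right-most, left-most, top-most dual, bottom-most dual); whether $x$ is a \emph{pivotal switch}, meaning these paths connect out to the appropriate sides of $H_N$ and $V_N$, is measurable in $\omega_p$ outside the interior $\mathbf Q_x$, and the bounding paths are separated by construction --- no separation lemma is needed. Conditioning on the $\sigma$-algebra $\mathcal F$ generated by $\omega_p$ outside $\bigcup_x\mathbf Q_x$ and the open/closed status of each switch, each $p$-closed switch becomes $p'$-open independently of the others with conditional probability $\ge \Delta_n$, giving the recursive inequality $\Delta_N\ge 1-\mathbf E[(1-\Delta_n)^{\mathbf X}]$ of Claim~\ref{lem:recursive relation}, where $\mathbf X$ is the number of $p$-closed pivotal switches; no grafting onto previously revealed arms is ever performed. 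The role that your bound $\pi_4(n,N)\ge c(n/N)^{2-\eps_0}$ was to play is taken by Claim~\ref{claim:lower bound}, $\mathbf E[\mathbf X]\ge c(N/n)^c$, which the paper proves for the \emph{separated} count directly, via the lowest crossing $\mathbf\Gamma$, the event $E_\gamma$, and Reimer's inequality, again short-circuiting arm separation rather than invoking it.
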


The above is the heart of the proof of Proposition~\ref{prop:comparison scale Delta}.
Before proving the lemma, we will show how it implies the proposition.

\begin{proof}[Proposition~\ref{prop:comparison scale Delta} subject to Lemma~\ref{lem:5}]
Fix $p'\ge p\ge p_c$ and write $\Delta_k$ instead of $\Delta_k(p,p')$. 
Also, fix an integer $R>16$ with $cR^c\ge 2$, where $c>0$ is the constant given by Lemma~\ref{lem:5}. 

Fix $n$ and $N$ as in the proposition. 
Let $K$ be the largest integer $k$ such that $16nR^{k}\le N$. Then,  \eqref{eq:5} implies that
\begin{align*}
&\Delta_N\ge c \, \min\{\Delta_{R^Kn},(16R)^{-2}\},\text{ and}\\
&\Delta_{R^kn}\ge 2\min\{\Delta_{R^{k-1}n},R^{-2}\}\text{ for every $1\le k\le K$}.
\end{align*}
As soon as $\Delta_N\le \frac{c}{(16R)^{2}}$, we may apply the previous relations recursively to obtain
\begin{equation}
\Delta_{n}\le 2^{-K}\Delta_{R^Kn}\le \tfrac{2^{-K}}{c}\Delta_N\le C_1(\tfrac nN)^{c_1}\Delta_N
\end{equation}
for some constants $c_1=c_1(R)>0$ and $C_1=C_1(c,c_1)$. This concludes the proof.
\end{proof}

The rest of the section is dedicated to proving Lemma~\ref{lem:5}. 
Fix two parameters $p'\ge p\ge p_c$ and two scales $n,N\le \xi(p)$ satisfying $n\le \tfrac1{16} N$.  
As above, we simply write $\Delta_k$ for $\Delta_k(p,p')$. 
All constants below are independent of the parameters $p$, $p'$, $n$ and $N$. 

 The proof relies on the notion of \emph{switches}, which correspond to  particular boxes  of size $n$ surrounded by  two paths in $\omega_p$  alternating with  two dual  paths in  $\omega_p^\star$. Even though the notion of switch is defined in term of the configuration at $p$, a key idea in the proof is to examine the effect of rising $p$ to $p'$ ``inside the switch''. 

Set $S:=(8n+1)\mathbb Z^2\cap \Lambda_{N/2}$. The left diagram of Fig.~\ref{fig:1} illustrates the definition below.  For $x\in S$, consider, when they exist,
\begin{itemize}[noitemsep]
\item the right-most path $\mathbf \Gamma_x^r$ of $\omega_p$ from bottom to top in $x+[n,2n]\times[-2n,2n]$,
\item the left-most path $\mathbf \Gamma_x^\ell$ of $\omega_p$ from bottom to top in $x+[-2n,-n]\times[-2n,2n]$,
\item   the bottom-most path ${\mathbf \Gamma}_x^{\star b}$ of $\omega_p^{\star}$ from ${\mathbf \Gamma}^\ell_x$ to ${\mathbf \Gamma}^r_x$ in $x+[-2n,2n]\times[-2n,-n]$,
\item the top-most path  ${\mathbf \Gamma}_x^{\star t}$  of $\omega_p^\star$ from $\mathbf \Gamma_x^\ell$ to $\mathbf \Gamma_x^r$ in $x+[-2n,2n]\times[n,2n]$. 
\end{itemize}
If the four paths above exist, we say that $x$ is a \emph{switch} and we write  $\mathbf Q_x$  for the region enclosed by the previous paths. Otherwise, we set $\mathbf Q_x=\emptyset$. 

\begin{figure}
\begin{center}
\raisebox{1cm}{\includegraphics[height = 5cm]{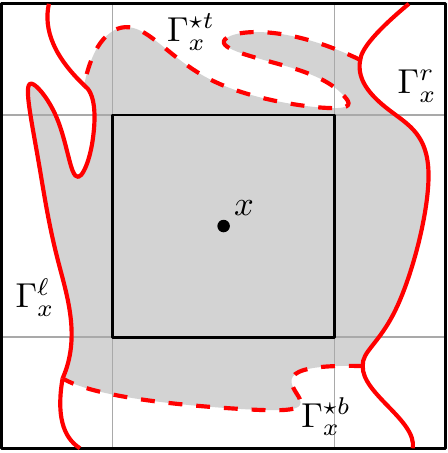}}
\hspace{1cm}\includegraphics[height = 7cm]{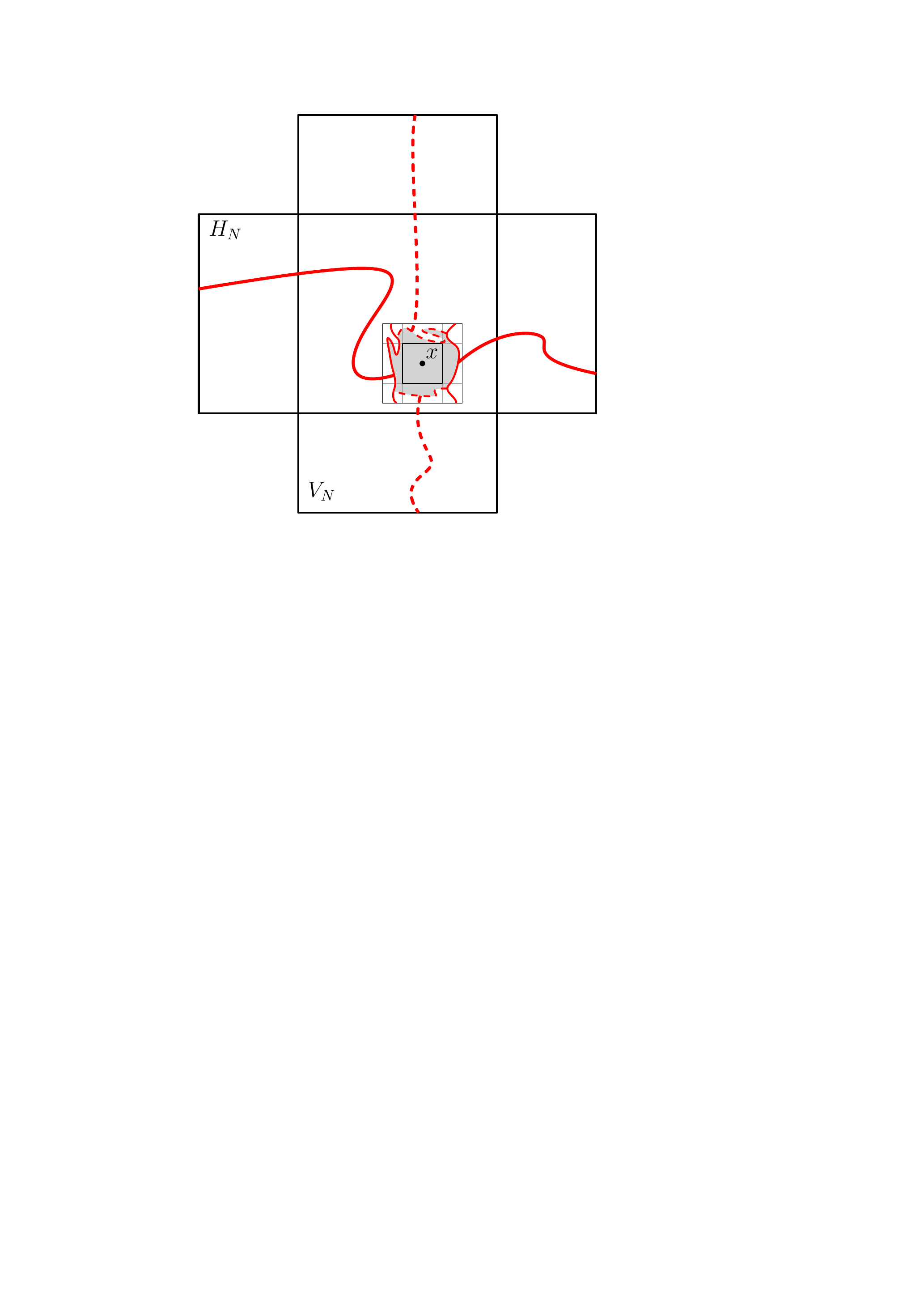} \caption{{\em Left:} The four paths $\mathbf \Gamma_x^t$, $\mathbf \Gamma_x^\ell$, ${\mathbf \Gamma}_x^{\star b}$ and ${\mathbf \Gamma}_x^{\star t}$ appearing in the definition of a switch at $x$. The gray region is $\mathbf Q_x$. 
{\em Right:} The four paths connecting the sides of $\mathbf Q_x$ to those of $H_n$ and $V_n$ ensure that $x$ is a pivotal switch.} 
\label{fig:1}
\end{center}\end{figure}

We now introduce the notion of pivotal switches, which resembles the notion of pivotal box in standard arguments dealing with critical Bernoulli percolation. See the right diagram of Fig.~\ref{fig:1} for an illustration.

\begin{definition}[Pivotal switches]
    A vertex  $x\in S$ is said to be a {\em pivotal switch} if 
    \begin{itemize}[noitemsep]
    \item $x$ is a switch (i.e. $\mathbf Q_x\ne \emptyset$);
    \item $\mathbf \Gamma_x^\ell$ and  $\mathbf \Gamma_x^r$ are  respectively connected to the left and right sides of $H_N$ in $\omega_p\cap H_N$;
    \item $\mathbf \Gamma_x^{\star t}$ and  $\mathbf \Gamma_x^{\star b}$ are respectively connected to the top and bottom sides of $V_N$ in $\omega_p^\star\cap V_N$.
    \end{itemize}
\end{definition}

Notice that when the connections between $\mathbf \Gamma_x^\ell,\dots, \mathbf \Gamma_x^{\star b}$
and the sides of $H_N$ and $V_N$ in the definition above occur, such connections necessarily occur outside of $\mathbf Q_x$, 
as depicted in Fig.~\ref{fig:1}. 
As such, the fact that $x$ is a pivotal switch is measurable in terms of $\omega_p$ outside of $\mathbf Q_x$.
Finally, by the choice of $S$, the regions $(\mathbf Q_x: x\in S)$ are pairwise disjoint and contained in $\La_N$. 

We also introduce the  notions of  $p$-open and $p'$-open  switches, which will lead to  the macroscopic equivalent of a pivotal edge being $p$-open and $p'$-open. The notions of switches and pivotal switches only involve the configuration at the parameter $p$. 
In contrast, it will be important to consider a notion of open switches that depends on the parameter. 

\begin{definition}[open/closed switches]
 Let $q\in \{p,p'\}$. A vertex $x\in S$ is said to be a  {\em $q$-open} (resp.~$q$-{\em closed}) \emph{switch} if 
\begin{itemize}[noitemsep]
\item $x$ is a switch (i.e. $\mathbf Q_x\ne \emptyset$);
\item $\mathbf \Gamma_x^\ell$ and $\mathbf \Gamma_x^r$ are connected (resp.~not connected) in $\omega_q\cap{\mathbf Q_x}$. 
\end{itemize}
\end{definition}

We can now focus on the proof of Lemma~\ref{lem:5}. The argument relies on the following observation: if there exists a pivotal switch which turns from being  $p$-closed to being $p'$-open, then the ``cross''-event defining $\Delta_N$ must occur, see~\eqref{eq:4}. In order to obtain a quantitative statement we introduce 
\[
\mathbf X:=\text{number of $p$-closed pivotal switches in $S$}\] 
and state the most important claim of the proof: 
\begin{claim}\label{lem:recursive relation} We have
\begin{equation}
\Delta_N\ge 1-\mathbf E[(1-\Delta_n)^{\mathbf X}].
\end{equation}
\end{claim}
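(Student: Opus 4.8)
The plan is to condition on the configuration $\omega_p$ restricted to the part of $\Lambda_N$ that lies outside of the regions $\mathbf Q_x$ for $x\in S$, together with the information needed to locate the pivotal switches. As noted in the excerpt, for each $x\in S$ the event that $x$ is a pivotal switch is measurable with respect to $\omega_p$ outside of $\mathbf Q_x$, because the four paths $\mathbf \Gamma_x^\ell,\mathbf \Gamma_x^r,\mathbf \Gamma_x^{\star t},\mathbf \Gamma_x^{\star b}$ form the boundary of $\mathbf Q_x$ and the required connections to the sides of $H_N$ and $V_N$ take place outside $\mathbf Q_x$. Hence, conditionally on this exterior data, the set of pivotal switches is determined, say it equals $\{x_1,\dots,x_m\}$ with enclosed regions $\mathbf Q_{x_1},\dots,\mathbf Q_{x_m}$; these regions are pairwise disjoint and contained in $\Lambda_N\subset H_N\cap V_N$.

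Next I would observe the deterministic containment at the core of the argument: if \emph{some} pivotal switch $x_j$ becomes $p'$-open while it was $p$-closed, then $\omega_{p'}\in\mathcal C(H_N)$ and $\omega_p\notin\mathcal C(V_N)$. Indeed, $\mathbf \Gamma_{x_j}^\ell$ and $\mathbf \Gamma_{x_j}^r$ are joined to the left and right sides of $H_N$ in $\omega_p\subset\omega_{p'}$, and if in addition they are joined to each other inside $\mathbf Q_{x_j}$ in $\omega_{p'}$, concatenating gives a left--right crossing of $H_N$ in $\omega_{p'}$. Symmetrically, $\mathbf \Gamma_{x_j}^{\star t}$ and $\mathbf \Gamma_{x_j}^{\star b}$ are joined to the top and bottom of $V_N$ in $\omega_p^\star$ and, since $x_j$ is $p$-closed, $\mathbf \Gamma_{x_j}^\ell$ and $\mathbf \Gamma_{x_j}^r$ are \emph{not} connected in $\omega_p\cap\mathbf Q_{x_j}$, which forces a dual crossing of $\mathbf Q_{x_j}$ separating them; glued to $\mathbf \Gamma_{x_j}^{\star t}$ and $\mathbf \Gamma_{x_j}^{\star b}$ this yields a top--bottom dual crossing of $V_N$, i.e. $\omega_p\notin\mathcal C(V_N)$. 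So $\Delta_N \ge \mathbf P[\exists j:\ x_j \text{ turns from $p$-closed to $p'$-open}]$.

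Now I would use independence. Conditionally on the exterior data, the configurations inside the disjoint regions $\mathbf Q_{x_1},\dots,\mathbf Q_{x_m}$ (both at parameter $p$ and the extra randomness promoting to $p'$, via the variables $U(e)$ for $e$ in these regions) are independent across $j$. For each $j$, the event ``$x_j$ is $p$-closed and $p'$-open'' says that inside $\mathbf Q_{x_j}$ the sides $\mathbf \Gamma_{x_j}^\ell,\mathbf \Gamma_{x_j}^r$ are not connected in $\omega_p$ but are connected in $\omega_{p'}$. I want to lower bound its conditional probability by $\Delta_n$; this is where the scale matters: $\mathbf Q_{x_j}\subset x_j+\Lambda_{2n}$ contains a ``cross'' configuration, and by comparing with the event in \eqref{eq:4} — using that $\mathbf Q_{x_j}$ contains translates of $H_n$ and $V_n$ and the fact that the relevant event for $\Delta_n$ depends only on edges in $\Lambda_{2n}$ — one checks that the conditional probability of this event is at least $\Delta_n(p,p')$. (Here one must be slightly careful that the conditioning fixes only exterior information and leaves the interior of $\mathbf Q_{x_j}$ fresh; the geometric positions of $\mathbf \Gamma_{x_j}^\ell,\mathbf \Gamma_{x_j}^r$ are the ``left'' and ``right'' interfaces, so the event in $\Delta_n$ really does imply the switch turns from $p$-closed to $p'$-open.) Consequently, conditionally on the exterior,
\begin{equation*}
\mathbf P[\,\nexists j:\ x_j \text{ turns $p$-closed to $p'$-open}\ \mid\ \text{exterior}\,] \le (1-\Delta_n)^{m},
\end{equation*}
where $m=\mathbf X$ is the (now conditionally determined) number of $p$-closed pivotal switches — note a $p$-open pivotal switch cannot turn, and a non-pivotal switch is irrelevant, so only the $\mathbf X$ closed pivotal switches contribute. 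Taking complements and then expectation over the exterior data gives
\begin{equation*}
\Delta_N \ge 1 - \mathbf E\big[(1-\Delta_n)^{\mathbf X}\big],
\end{equation*}
as claimed.

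The main obstacle I anticipate is the bookkeeping in the conditioning step: one must carefully specify a $\sigma$-algebra that (i) determines which $x\in S$ are pivotal switches and the exact regions $\mathbf Q_x$, (ii) leaves the configuration strictly inside each $\mathbf Q_x$ independent and ``fresh'' (both the $p$-layer and the $p\to p'$ upgrade), and (iii) makes $\mathbf X$ measurable. The standard device is to explore $\omega_p$ from outside — revealing $\mathbf \Gamma_x^r,\mathbf \Gamma_x^\ell$ as extremal paths and the dual paths $\mathbf \Gamma_x^{\star t},\mathbf \Gamma_x^{\star b}$, which bound $\mathbf Q_x$, and then revealing everything outside $\bigcup_x\mathbf Q_x$ — so that the law inside each $\mathbf Q_x$ is untouched. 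One also has to make sure the event used to produce the switch turning ($\Delta_n$-type cross event placed inside $\mathbf Q_{x_j}$) genuinely uses only edges that have not been revealed, which is why the extremal (right-most/left-most, top-most/bottom-most) choices of the boundary paths are essential — they guarantee $\mathbf Q_x$ is on the unexplored side. Everything else is a routine combination of the containment observation and independence.
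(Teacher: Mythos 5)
Your overall strategy follows the paper's: condition on enough exterior information, use the containment observation that a pivotal switch turning from $p$-closed to $p'$-open forces the $\Delta_N$ event, then exploit independence across the disjoint regions $\mathbf Q_{x_j}$ with the per-region probability bounded below by $\Delta_n$. These ingredients are all correct. The weak point is precisely the conditioning bookkeeping, which you flag as the main obstacle but then describe in a way that cannot be realized.

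You require a $\sigma$-algebra that simultaneously (ii) leaves the $p$-layer inside each $\mathbf Q_x$ fresh and (iii) makes $\mathbf X$ measurable. These two demands are incompatible: whether a switch $x$ is $p$-closed is an event about $\omega_p$ restricted to $\mathbf Q_x$, so (iii) forces revealing part of $\omega_p\cap\mathbf Q_x$, contradicting (ii). Your proposed exploration-from-outside satisfies (ii) but not (iii): it determines the \emph{set} of pivotal switches, not which of them are $p$-closed. Consequently the sentence ``$m=\mathbf X$ is the (now conditionally determined) number of $p$-closed pivotal switches'' is false for the $\sigma$-algebra you describe.

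There are two correct ways to finish, and your proof sits uncommitted between them. Option A, closest to your argument and in fact giving a slightly stronger intermediate bound: condition on the exterior only, so the interior of each $\mathbf Q_{x_j}$ is genuinely fresh, and note that for \emph{every} pivotal switch $x_j$ one has $\mathbf P[x_j\text{ is $p$-closed and $p'$-open}\mid\mathcal F]\ge\Delta_n$ by the inclusion of events; hence $\mathbf P[\text{nothing turns}\mid\mathcal F]\le(1-\Delta_n)^{M}$ where $M$ is the ($\mathcal F$-measurable) number of pivotal switches. Taking expectations and using the pointwise inequality $M\ge\mathbf X$ gives $\Delta_N\ge 1-\mathbf E[(1-\Delta_n)^{M}]\ge 1-\mathbf E[(1-\Delta_n)^{\mathbf X}]$. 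Option B, which is the paper's choice: enrich $\mathcal F$ with the indicator of each switch being $p$-open or $p$-closed. Then $\mathbf X$ is $\mathcal F$-measurable, but the $p$-layer inside $\mathbf Q_x$ is no longer fresh; for a $p$-closed switch the conditional probability of becoming $p'$-open is the ratio $\big(\mathrm P_{p'}[\mathcal C(\mathbf Q_x)]-\mathrm P_p[\mathcal C(\mathbf Q_x)]\big)/\big(1-\mathrm P_p[\mathcal C(\mathbf Q_x)]\big)$, which is still $\ge\Delta_n$ since the denominator is at most $1$. As written, your argument neither achieves the measurability of $\mathbf X$ nor compensates for its absence; either of the two fixes above closes the gap.
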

\begin{proof}

Let $\mathcal F$ be the $\sigma$-algebra generated by the $\omega_p(e)$ with $e\notin \bigcup_{x\in S}\mathbf Q_x$ and the information of whether the switches in $ S$ are $p$-closed or $p$-open.
We make three easy observations:
\begin{itemize}[noitemsep]
\item[(1)] $\mathbf X$ is $\mathcal F$-measurable;
\item[(2)]  if there exists a pivotal switch $x$ that is $p$-closed  and $p'$-open,
then $\omega_{p'}\in \mathcal C(H_n)$ and  $\omega_p\notin \mathcal C(V_n)$;
\item[(3)] when conditioning on $\mathcal F$, each $p$-closed  switch $x$ is $p'$-open  independently of the other switches, with a probability\footnote{The first inequality is obtained by forgetting the denominator and by interpreting the difference of probabilities in terms of the increasing coupling. The second inequality is due to the inclusion of events. }
\[
\frac{\mathrm P_{p'}[\mathcal C(\mathbf Q_x)]-\mathrm P_{p}[\mathcal C(\mathbf Q_x)]}{1-\mathrm P_p[\mathcal C(\mathbf Q_x)]}\ge \mathbf P[\omega_{p'}\in \mathcal C(\mathbf Q_x),\omega_p\notin\mathcal C(\mathbf Q_x)]\ge \Delta_n
,\]
where $\mathcal C(\mathbf Q_x)$ denotes the event that $\mathbf \Gamma^\ell_x$ and $\mathbf \Gamma^r_x$ are connected to each other in $\mathbf Q_x$. 
\end{itemize}
The claim follows readily by first conditioning on $\mathcal F$ and then combining the previous three observations.
\end{proof}

One may recognise in \eqref{lem:recursive relation} a recursive relation similar to that obtained when studying the survival probability of branching processes. It therefore comes as no surprise that a lower bound on the expectation of $\mathbf E[\mathbf X]$ is an important piece of information to use in conjunction with the previous claim. We derive such a lower bound in the next claim. Note that this is a statement at $p$ only.

\begin{claim}\label{claim:lower bound}
There exists $c>0$ independent of $p, n, N$ such that
\begin{equation}
\mathbf E[\mathbf X]\ge c(\tfrac Nn)^c.
\end{equation}
\end{claim}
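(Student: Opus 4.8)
The plan is to bound $\mathbf E[\mathbf X] = \sum_{x\in S}\mathbf P[x\text{ is a $p$-closed pivotal switch}]$ from below by showing that each individual probability is bounded below by a constant, and then counting the points of $S$. Since $|S|\asymp (N/n)^2$, even a constant lower bound for each summand would give $\mathbf E[\mathbf X]\gtrsim (N/n)^2$, which is much stronger than what is claimed; but the issue is that the pivotal-switch event for a given $x$ is \emph{not} bounded below by a constant, because it requires $\mathbf\Gamma_x^\ell,\mathbf\Gamma_x^r$ to connect to the far sides of $H_N$ and $\mathbf\Gamma_x^{\star t},\mathbf\Gamma_x^{\star b}$ to connect to the far sides of $V_N$ — a long-range connection of diameter up to $N$. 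So I would instead estimate, for $x$ at distance of order $\rho$ from the origin (say $\rho\in[n,N]$), the probability that $x$ is a $p$-closed pivotal switch, using the box-crossing property \eqref{eq:BXP}. The local switch structure inside $x+\Lambda_{2n}$ (four paths bounding $\mathbf Q_x$, and $\mathbf\Gamma_x^\ell$ not connected to $\mathbf\Gamma_x^r$ inside $\mathbf Q_x$ at parameter $p$) has probability bounded below by a constant $c_0>0$ by RSW, uniformly over $n\le\xi(p)$ (this is where $N\le\xi(p)$ enters — crossing probabilities at all scales $\le\xi(p)$ are bounded away from $0$ and $1$; cf.\ \eqref{eq:BXP}). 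The long-range part — extending $\mathbf\Gamma_x^\ell$ to the left side of $H_N$, $\mathbf\Gamma_x^r$ to the right side, and the two dual arms to the top and bottom of $V_N$ — is a four-arm-type crossing in the annular region between scale $n$ and scale $N$ around $x$, but of the \emph{polychromatic, well-separated} type realised by disjoint crossings of $O(\log(N/n))$ dyadic annuli, each contributing a constant factor; so it has probability at least $(n/N)^{C}$ for some constant $C$ (for $x$ near the center; for $x$ near $\partial\Lambda_{N/2}$ one only needs to cross from $x$ to the nearby side of $H_N$, which is even easier). Combining, $\mathbf P[x\text{ is a $p$-closed pivotal switch}]\ge c'(n/N)^{C}$ for every $x\in S$.

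Then I would sum: $\mathbf E[\mathbf X]\ge |S|\cdot c'(n/N)^{C}\ge c''(N/n)^{2-C}$. If $C<2$ this already gives the claim with $c=2-C$; if not, I would need to be more careful and only use those $x\in S$ lying at distance of order $N$ from the origin but at distance of order $n$ from $\partial\Lambda_{N/2}$ — for such $x$ the ``long-range'' extension of the four arms only has to reach the side of $H_N$ (or $V_N$) at bounded distance, so it costs only a constant, not $(n/N)^{C}$. There are of order $N/n$ such boundary points of $S$, giving $\mathbf E[\mathbf X]\gtrsim N/n$, which suffices. Actually the cleanest route, and the one I would follow, is this boundary-layer argument from the start: restrict attention to $x\in S$ with, say, $x_2$ of order $N$ (so $x$ is near the top of $\Lambda_{N/2}$) — there are $\asymp N/n$ such $x$, each is a $p$-closed pivotal switch with probability bounded below by a \emph{constant} (local switch of probability $\ge c_0$, times four arms that only traverse a bounded number of scales to reach $\partial H_N$ and $\partial V_N$, of probability $\ge c_1$), all by \eqref{eq:BXP}; summing gives $\mathbf E[\mathbf X]\ge c(N/n)$, i.e.\ the claim with $c=1$.

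The main obstacle is bookkeeping the gluing of the local switch data to the long-range arms: one must check that the events can be realised by crossings of \emph{disjoint} regions (so that the FKG/square-root-trick and independence over disjoint edge-sets apply cleanly), that the resulting configuration genuinely makes $x$ a pivotal switch in the precise sense of the definition (in particular that the extremal paths $\mathbf\Gamma_x^\ell,\mathbf\Gamma_x^r,\mathbf\Gamma_x^{\star t},\mathbf\Gamma_x^{\star b}$ defined as the left/right/top/bottom-most paths are the ones carrying the connections, which follows since the connections can be taken outside $\mathbf Q_x$ and extremality is an interior notion), and that the $p$-closed condition ($\mathbf\Gamma_x^\ell\not\leftrightarrow\mathbf\Gamma_x^r$ in $\omega_p\cap\mathbf Q_x$) is compatible with everything else — this is guaranteed because $\mathbf Q_x$ is the region strictly enclosed by the four paths and the dual crossings of $\mathbf Q_x$ forcing closedness live inside $\mathbf Q_x$, independent of the exterior. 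All of these are standard consequences of RSW of the kind packaged in Appendix~\ref{sec:lem}, so the proof is routine once the geometric picture is drawn; the only genuinely model-specific input is \eqref{eq:BXP} applied at all scales up to $\xi(p)$.
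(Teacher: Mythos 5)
Your first route (lower-bound each summand by an arm-event probability $(n/N)^{C}$ and multiply by $|S|\asymp(N/n)^{2}$) is correct in spirit but is exactly where the difficulty lies: the box-crossing property \eqref{eq:BXP} alone only gives $(n/N)^{C}$ for \emph{some} universal $C$, with no control on whether $C<2$. Establishing that the (well-separated, polychromatic) four-arm probability from scale $n$ to scale $N$ beats $(n/N)^{2}$ is the nontrivial content of the claim, and you cannot assume it.

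Your fallback ``boundary-layer'' route is geometrically wrong, and this is the genuine gap. For $x\in S\subset\Lambda_{N/2}$, being a pivotal switch requires $\mathbf\Gamma_x^\ell,\mathbf\Gamma_x^r$ to connect to the left and right sides of $H_N=[-2N,2N]\times[-N,N]$ and $\mathbf\Gamma_x^{\star t},\mathbf\Gamma_x^{\star b}$ to connect to the top and bottom of $V_N=[-N,N]\times[-2N,2N]$; these targets sit at coordinates $\pm 2N$, hence at Euclidean distance at least $3N/2$ from \emph{every} point of $\Lambda_{N/2}$. So even for $x$ near $\partial\Lambda_{N/2}$, the four arms must each span $\Theta(\log(N/n))$ dyadic scales, and the pivotal-switch probability is polynomially small in $n/N$ uniformly over $x\in S$ — not bounded below by a constant. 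There is no sub-family of $\asymp N/n$ ``cheap'' sites, and summing over the boundary layer does not give the claim. (If anything, near the boundary of $\Lambda_{N/2}$ some of the arms travel through half-plane-like regions, which makes them \emph{less} likely, not more.)

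The paper's proof sidesteps the exponent question by a different mechanism: it considers the lowest crossing $\mathbf\Gamma$ of the cross-shaped region $C=H_N\cup V_N$, shows that with uniformly positive probability $\mathbf\Gamma$ lies in a thin strip $R$ \emph{and} some $x\in S$ admits $\calA_4^{\rm sep}(x)$ together with a disjointly occurring extra fifth arm $\calA_1(x)$ to the far left of $R$, and then peels off that cheap extra arm via Reimer's inequality. Since $\mathbf P[\calA_1(x)]\le(n/N)^{c_3}$ by \eqref{eq:BXP}, this converts the constant lower bound $c_1c_2\le\sum_x\mathbf P[\calA_4^{\rm sep}(x)\circ\calA_1(x)]$ into the polynomial gain $\sum_x\mathbf P[\calA_4^{\rm sep}(x)]\ge c_1c_2(N/n)^{c_3}$, and the pivotal-switch probabilities are then compared to $\mathbf P[\calA_4^{\rm sep}(x)]$ locally. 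This ``add an extra arm and remove it by Reimer'' step is the idea your proposal is missing, and it is what replaces any quantitative knowledge of the four-arm exponent.
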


 \begin{figure}[t]
  \begin{center}
   \includegraphics[width = .45\textwidth]{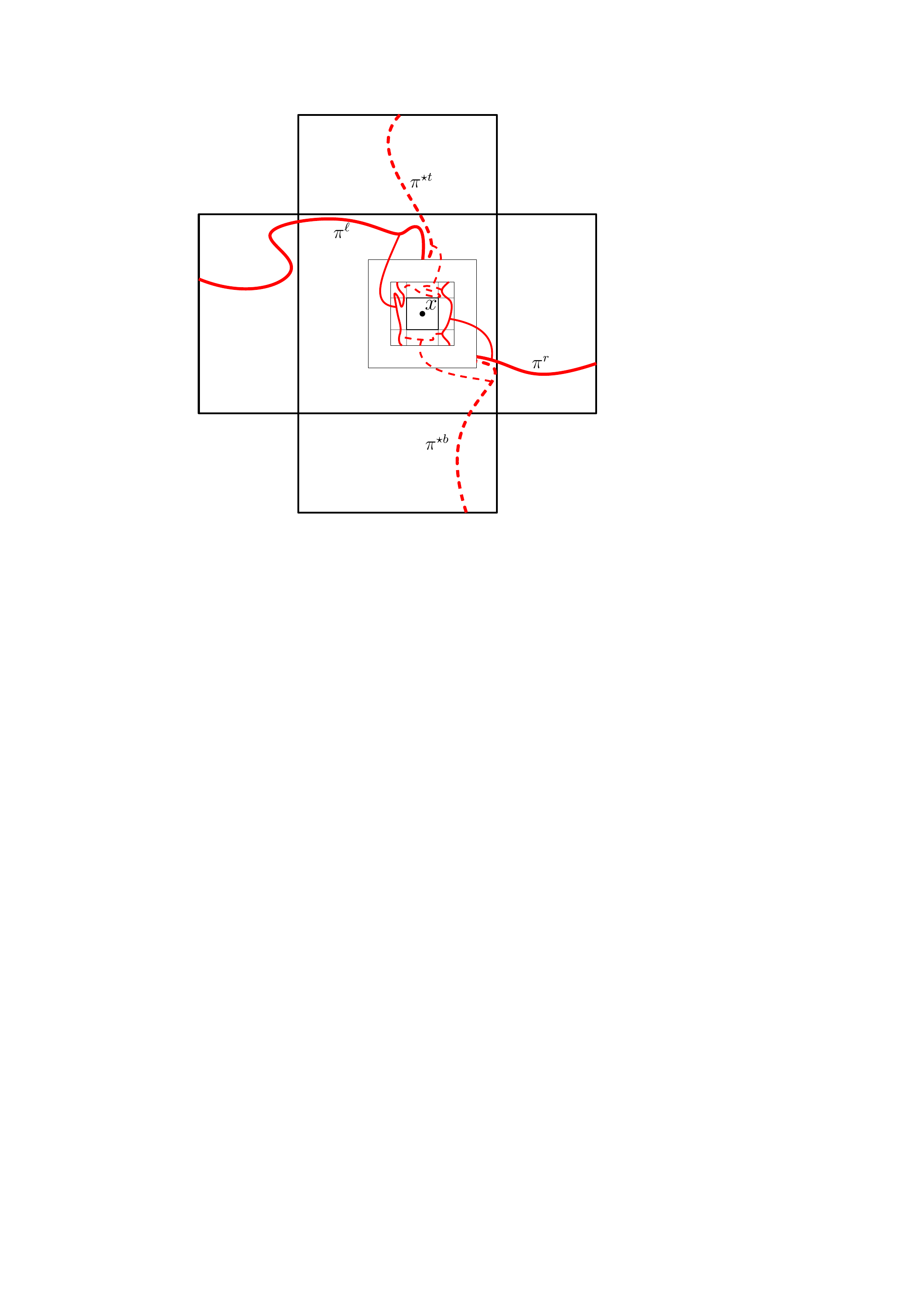}
  \hspace{.05\textwidth}
 \includegraphics[width = .45\textwidth]{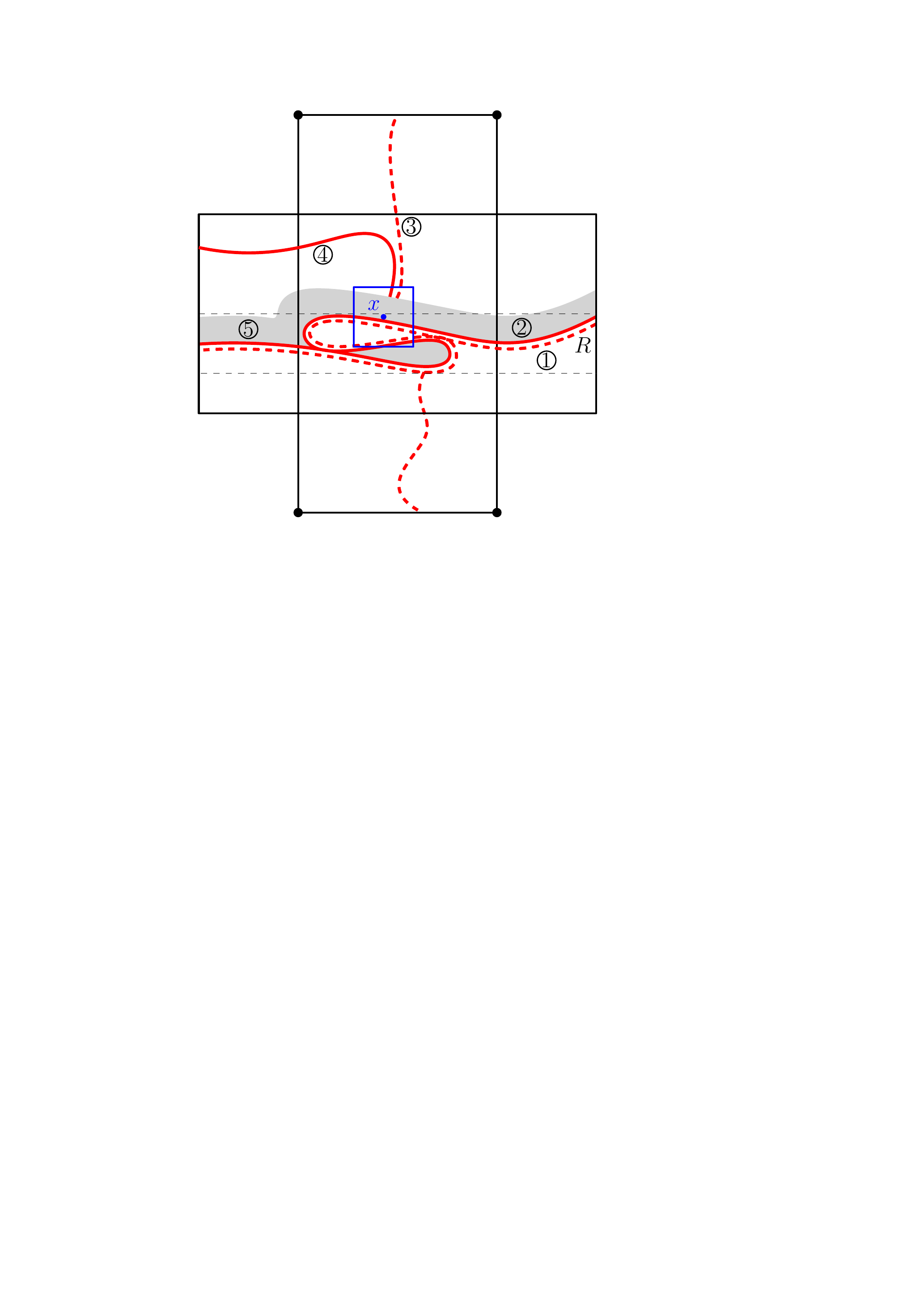}
  \caption{{\em Left:} When $\calA_4^{\rm sep}(x)$ occurs, 
  if we define $\pi^\ell$ to be the top-most connection between the left side of $H_N$ and $\La_{8n}(x)$,  
  $\pi^{\star t}$ to be the left-most dual connection between the top of $V_N$ and $\La_{8n}(x)$,  
  $\pi^r$ to be the bottom-most connection between the right side of $H_N$ and $\La_{8n}(x)$ and  
  $\pi^{\star b}$ to be the right-most dual connection between the bottom of $V_N$ and $\La_{8n}(x)$, 
  then these paths necessarily satisfy the separation requirement. 
  The space left between them may be used to connect them to primal and dual crossings 
  of the $4n\times n$ rectangles contained in $\La_{2n}\setminus \La_n$, thus producing ${\mathcal Piv}(x)$.
  Due to the box-crossing property \eqref{eq:BXP}, the latter connections and crossings occur with uniformly positive probability, 
  conditionally on $\pi^\ell$, $\pi^{\star t}$,  $\pi^r$ and  $\pi^{\star b}$, thus proving \eqref{eq:aha}. \newline
  {\em Right:} A realisation of $\mathbf \Gamma$ contained in $R$; the domain $D_{\mathbf \Gamma}$ is above the shaded area. 
  When $E_{\mathbf \Gamma}$ occurs, there exists at least one $x \in S$ for which $\calA_4^{\rm sep}(x) \circ \calA_1(x)$ occurs: the arms producing $\calA_4^{\rm sep}(x)$ for the blue box are numbered $1,\dots,4$; the fifth arm is disjoint from the first four and produces $\calA_1(x) $.
    }\label{fig:pivs}
  \end{center}
  \end{figure}

	The result may be obtained through classical arguments of planar percolation: 
	the separation of arms allows one to compare $\mathbf E[\mathbf X]$ to the expected number of pivotal boxes, 
	which in turn may be shown to be polynomially high. 
	We give below a modified version of the argument, 
	which short-circuits the separation of arms by directly showing a polynomial bound for the number of pivotal boxes with separated arms. 
\begin{proof}   	
  For $x \in S$, let $\calA_4^{\rm sep}(x)$ be the event that there exist 
  two primal paths $\pi^\ell$ and $\pi^r$ in $\omega_p\cap H_N$ respectively from the left and right of $H_N$ reaching $\Lambda_{8n}(x)$ 
  and two dual paths $\pi^{\star t}$ and $\pi^{\star b}$ in $\omega^\star_p \cap V_N$ from the top and bottom of $V_N$ reaching $\Lambda_{8n}(x)$ 
  such  that the Euclidean distance between $\pi^\ell\cup \pi^{\star t}$ and  $\pi^r\cup \pi^{\star b}$ is at least $2n$.
  
    A construction based on the box-crossing property \eqref{eq:BXP} and illustrated in Fig.~\ref{fig:pivs} on the left (see also the caption for details) shows	the existence of a constant $c_0$ independent of $p$, $n$ and $N$ such that, for every $x\in S$, 
    \begin{equation}\label{eq:aha}
	    \mathbf P[{\mathcal Piv}(x)]\ge c_0 \mathbf P[\calA_4^{\rm sep}(x)],
    \end{equation}
    where ${\mathcal Piv}(x)$ is the event that $x$ is a $p$-closed  pivotal switch.

Therefore, it suffices to prove the existence of a constant $c'>0$ such that
\begin{equation}\label{eq:h}
    \sum_{x\in S} \mathbf P[\mathcal A_4^{\rm sep}(x)]\ge c'(\tfrac Nn)^{c'}.
\end{equation}

Consider the ``cross''-shaped domain $C:=H_N\cup V_N$. Define the top and bottom of $C$ to be respectively the top and bottom sides of $V_N$. The left and right sides of $C$ are the remaining two parts of the boundary of $C$, both being the union of five segments, see Fig~\ref{fig:pivs} on the right. Define $\mathbf \Gamma$ to be the lowest path of $\omega_p\cap C$ from the left to the right side of~$C$;
set $\mathbf \Gamma = (0,-\infty)$ when no such path exists. 
Let $R:=[-2N,2N]\times[-N/2,0]$ be a thin horizontal rectangle inside~$H_N$. 
As a direct consequence of the box-crossing property \eqref{eq:BXP}, there exists a uniform constant $c_1 > 0$ such that
\begin{equation}\label{eq:6}
	  \mathbf P[\mathbf \Gamma \subset R]\ge c_1.
\end{equation}

For every realisation $\gamma \subset R$ of $\mathbf \Gamma$, 
consider the domain $D_\gamma \subset C$, defined as the set of points of $C$ above $\gamma$ 
that are at a Euclidean distance at least $2n$ from $\gamma$. 
Define the event $E_\gamma$ that there exist a primal vertex $u$ and a dual vertex $v$ both on the intersection of the boundary of $D_\gamma$ and $ \Lambda_{N/2}$ 
with $|u-v|=1/2$ and
\begin{itemize}
	\item $u$ is connected to the left side of $H_N$ in $\omega_p\cap D_\gamma$,
	\item $v$ is connected to the top side of $V_N$ in $\omega_p^\star\cap D_\gamma$.
\end{itemize}
Again, using a standard construction and the box-crossing property, 
one can prove that $\mathbf P[E_\gamma]\ge c_2$ for every $\gamma$ as above, where $c_2$ is a uniformly positive constant.

Now, for every realisation $\gamma\subset R$ of $\mathbf \Gamma$, 
the events $\{\mathbf \Gamma=\gamma\}$ and $E_\gamma$ are independent. Thus
\begin{align}
 \nonumber
  \mathbf P[ \mathbf \Gamma \subset R \text{ and $E_{\mathbf \Gamma}$ occurs}]
  &= \sum_{\gamma\subset R} \mathbf P[\{\mathbf \Gamma=\gamma\}\cap E_\gamma]\\
  &\ge c_2\mathbf P[\mathbf \Gamma \subset R]\ge c_1c_2.      \label{eq:9}
\end{align}

For $x \in S$, let $\mathcal A_1(x)$ denote the event that there exists a path in $\omega_p\cap C$ from $\Lambda_{8n}(x)$ to the left side of $R$. When  $\mathbf \Gamma$ is contained in $R$ and $E_{\mathbf \Gamma}$ occurs, there exists at least one vertex $x\in S$ such that $\mathcal A_4^{\rm sep}(x)$ and $\mathcal A_1(x)$ occur disjointly -- we denote this event by $\mathcal A_4^{\rm sep}(x)\circ\mathcal A_1(x)$.
Indeed, the event $\mathcal A_4^{\rm sep}(x)\circ\mathcal A_1(x)$ occurs for any $x$ such that the points $u,v$ appearing in the definition of $E_{\mathbf \Gamma}$ are contained in $\La_{8n}(x)$; see the right diagram of Fig.~\ref{fig:pivs} for details. 
Therefore, the union bound together with \eqref{eq:9} imply that
\begin{equation}\label{eq:10}
  c_1c_2\le \sum_{x\in S} \mathbf P[\mathcal A_4^{\rm sep}(x)\circ \mathcal A_1(x)].   
\end{equation}
The  box-crossing property \eqref{eq:BXP} implies that  $\mathbf P[\mathcal A_1(x)]\le (n/N)^{c_3}$ for every $x\in S$. Therefore, by applying\footnote{
The derivation of \eqref{eq:11} from \eqref{eq:9} can be done without the use of Reimer's inequality, but we chose to present the shortest argument.} Reimer's inequality \cite{Rei00} in the equation above,   we conclude that
\begin{equation}
  \label{eq:11}
   c_1c_2\le \left(\tfrac n N\right)^{c_3} \sum_{x\in S} \mathbf P[\mathcal A_4^{\rm sep}(x)].
\end{equation}
This concludes the proof of \eqref{eq:h} and therefore of the whole claim.
\end{proof}

Finally, Lemma~\ref{lem:5} directly follows from the two claims above. 

\begin{proof}[Lemma~\ref{lem:5}]
	Using that $\mathbf X\le |S|\le (N/n)^2$ and 
    that $x\mapsto 1-(1-x)^y$ is decreasing in $x$ and larger than $\tfrac1eyx$ when $0\le yx\le 1$,
    Claim~\ref{lem:recursive relation} yields
    \begin{equation}
	    \Delta_N\ge \tfrac1e\mathbb E_p[\mathbf X] \min\{\Delta_n,(\tfrac nN)^2\}.
    \end{equation}
    Then, simply plug the lower bound of Claim~\ref{claim:lower bound} in the equation above to obtain \eqref{eq:5}.
\end{proof}

\section{Proof of Proposition~\ref{prop:comparison delta Delta}}

As already mentioned, Proposition~\ref{prop:comparison delta Delta} is similar to previous results known as separation lemmas for arm events -- see for instance~\cite{Kes87,Nol08,GPS}. 
In our context, the event $\{\omega_{p'}\in \mathcal C(H_n),\omega_{p}\notin \mathcal C(V_n)\}$ plays the role of the well-separated arm event and $\calE_n$ that of an unconstrained arm event.  
The proof below follows the lines of the original argument of Kesten. The reader well acquainted with arm separation arguments may skip the whole section.
Three lemmas will come into play; the first states that well-separated arms may be extended from scale $n$ to scale $2n$ at constant cost. 
 
\begin{lemma}\label{lem:1}
    There exists $c_1>0$ such that for every $p'\ge p\ge p_c$ and $n\le \xi(p')$,
    \[
	   \Delta_{4n}(p,p')\ge c_1\,\Delta_{4\lfloor n/2\rfloor}(p,p').
    \] 
\end{lemma}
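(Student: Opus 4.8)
The plan is to realise the larger event defining $\Delta_{4n}(p,p')$ by first exploring a realisation of the smaller event inside a bounded region, and then extending the two relevant interfaces outwards at a uniformly positive conditional cost using the box-crossing property. It is convenient to pass to the dual: for any $k$, the event $\{\omega_p\notin\mathcal C(V_k)\}$ coincides with the event that $\omega_p^\star$ contains a top-to-bottom dual crossing of $V_k$. Hence $\Delta_k(p,p')$ is the probability that $\omega_{p'}$ contains a left-right primal crossing of $H_k$ while $\omega_p^\star$ contains a top-to-bottom dual crossing of $V_k$; since $p\le p'$, these two crossings are forced to meet inside $\Lambda_k$, and only along edges $e$ with $p<U(e)\le p'$, so that $\Delta_k$ behaves like a ``mixed two-arm'' quantity of the kind handled by the separation lemmas of \cite{Kes87}. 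Put $m:=4\lfloor n/2\rfloor\le 2n$, so that the event defining $\Delta_m(p,p')$ is measurable with respect to the configuration inside $\Lambda_{4n}$, while $\Lambda_{8n}$ leaves room to extend.

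On the event defining $\Delta_m(p,p')$, I would explore the configuration so as to reveal a primal $p'$-crossing $\mathbf\Gamma$ of $H_m$ and a dual $p$-crossing $\mathbf\Gamma^\star$ of $V_m$ — say the lowest such primal crossing and the leftmost such dual crossing, produced by explorations started from the bottom side of $H_m$ and the left side of $V_m$. Let $\mathcal G$ be the $\sigma$-algebra generated by this exploration. The two horizontal wings $[2m,8n]\times[-m,m]$ and $[-8n,-2m]\times[-m,m]$, together with the two vertical wings $[-m,m]\times[2m,8n]$ and $[-m,m]\times[-8n,-2m]$, all lie outside $\Lambda_{2m}$, are pairwise disjoint, and are $\mathcal G$-fresh. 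The heart of the matter is the classical one-step arm-separation and extension of Kesten (see also the review \cite{Nol08}): conditionally on $\mathcal G$ and on $\Delta_m(p,p')$, with probability bounded below uniformly in $p,p',m,n$, one can (i) connect $\mathbf\Gamma$ inside $\omega_{p'}$ to the left and right sides of $H_{4n}$ through the two horizontal wings, and (ii) connect $\mathbf\Gamma^\star$ inside $\omega_p^\star$ to the top and bottom sides of $V_{4n}$ through the two vertical wings. Each of (i) and (ii) is a standard ``extend a revealed interface to the side of a larger box'' construction built from the box-crossing property \eqref{eq:BXP} (applicable at the relevant scales, which are of order $n\le\xi(p')$) and the FKG inequality; the only genuine subtlety, handled exactly as in the classical separation argument, is that $\mathbf\Gamma$ and $\mathbf\Gamma^\star$ may reach the sides of $H_m$ and $V_m$ in bad positions. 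Crucially, (i) uses only primal edges of $\omega_{p'}$ in the two horizontal wings, (ii) uses only dual edges of $\omega_p^\star$ in the two vertical wings, and these four wings are pairwise disjoint; hence, conditionally on $\mathcal G$, the two extensions are independent and cannot conflict, even though one lives in $\omega_{p'}$ and the other in $\omega_p^\star$.

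When (i), (ii) and $\Delta_m(p,p')$ all occur, $\omega_{p'}$ contains a left-right crossing of $[-8n,8n]\times[-m,m]\subseteq H_{4n}$ and $\omega_p^\star$ contains a top-to-bottom dual crossing of $[-m,m]\times[-8n,8n]\subseteq V_{4n}$, the latter precluding any left-right crossing of $V_{4n}$ in $\omega_p$; hence $\Delta_{4n}(p,p')$ occurs. Chaining the conditional estimates then yields $\Delta_{4n}(p,p')\ge c_1\,\Delta_{4\lfloor n/2\rfloor}(p,p')$. The main obstacle is precisely the separation step: guaranteeing that the two revealed interfaces can be extended at uniformly positive cost no matter where they hit the sides of $H_m$ and $V_m$. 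This is the technical core of Kesten's arm-separation machinery, which I would invoke rather than reprove; the one new feature compared with the single-configuration setting is that one interface is primal at $p'$ and the other dual at $p$, and this is dealt with for free by routing the two extensions through disjoint regions.
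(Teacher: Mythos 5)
Your proposal is correct in outline, but it is substantially heavier than what the paper does, and in one place slightly misdiagnoses what the difficulty is. The paper's proof is a short two-step FKG argument with no exploration and no arm separation: it conditions on the $\sigma$-algebra $\mathcal F$ generated by $\omega_{p'}$ outside $V_n$, notes that the increasing event $B$ (some $\omega_{p'}$-crossings of rectangles that partly overlap $H_n$ and partly stick out) is $\mathcal F$-measurable while $\mathbf E[\mathbf 1_A\mid\mathcal F]$ is increasing, and applies FKG to get $\mathbf P[A']\ge\mathbf P[A]\mathbf P[B]$; the dual extension is then done by a symmetric FKG step on $\omega_p^\star$. Because the FKG argument does not reveal the crossing, the extension rectangles are allowed to overlap $H_n$, and the vertical crossing of $[n,2n]\times[-n,n]$ automatically intersects whatever $\omega_{p'}$-crossing of $H_n$ exists, purely by topology. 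Your route instead explores the lowest primal crossing and leftmost dual crossing and then tries to extend them through wings lying entirely outside $H_m\cup V_m$. As stated this is incomplete: a box crossing of $[2m,8n]\times[-m,m]$ has no reason to touch the point $(2m,y)$ where $\mathbf\Gamma$ exits $H_m$, so the extension construction necessarily re-enters $H_m$ near its right edge, where the exploration has already revealed $\omega_{p'}$ below $\mathbf\Gamma$. This is repairable -- the region above $\mathbf\Gamma$ is fresh, and the standard ``extend a revealed interface'' construction (which is indeed what you invoke) handles it -- but it is a genuine extra ingredient your write-up glosses over, and it is \emph{not} arm separation: with a single primal crossing of $H_m$ and a single dual crossing of $V_m$, each already spanning its box, there are no arms to separate and no ``bad landing positions'' to worry about, contrary to what you suggest. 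So your proof would go through once the extension-from-explored-interface step is spelled out, but it uses exploration plus Kesten-style extension where a two-line conditional FKG suffices; and the appeal to Kesten's separation machinery, rather than buying robustness, mostly imports machinery that is unnecessary here (the paper reserves genuine arm separation for Proposition~\ref{prop:comparison delta Delta}, where it really is needed).
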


\begin{proof} Fix $p$ and $p'$ and drop them from notations.
  For simplicity, we prove $ \Delta_{2n}\ge c_1\,\Delta_{n}$; the statement in the lemma is obtained exactly in the same way.  Let \[A:=\{\omega_{p'}\in \mathcal C(H_n),\,\omega_{p}\notin \mathcal C(V_n)\}\] be the event appearing in the definition of $\Delta_n$.    Consider the event $B$ that in the configuration $\omega_{p'}$ there exist
   \begin{itemize}[noitemsep]
   \item vertical crossings of $[-2n,-n]\times[-n,n]$ and $[n,2n]\times[-n,n]$,
   \item horizontal crossings of $[-4n,-n]\times[-n,n]$ and $[n,4n]\times[-n,n]$.
   \end{itemize}
   When both $A$ and $B$ occur, then the event $A'$ defined in the same way as $A$ except that the horizontal rectangle $H_n$ is replaced by the longer rectangle $H_n':=[-4n,4n]\times[-n,n]$ must also occur.  
   
   In order to estimate the probability of $A\cap B$, condition on the   $\sigma$-algebra  $\mathcal F$  generated by the configuration $\omega_{p'}$ outside the rectangle $V_n$, and use the following two observations. First, the event $B$ introduced above is increasing and measurable with respect to $\mathcal F$. Second, the  conditional expectation $\mathbf E[\mathbf 1_A\:|\: \mathcal F]$ is increasing (as  a random variable defined on the product space $[0,1]^{\mathbb E}$). Using these two facts together with the FKG inequality (applied to the product  measure $\mathbf P$), we find
      \begin{align}\label{eq:21}
       \mathbf P[A']
       \ge  \mathbf P[A\cap B]
       = \mathbf E\big[\mathbf E[\mathbf 1_A\:|\: \mathcal F ] \cdot \mathbf 1_B\big ]
       \ge \mathbf E\big[\mathbf E[\mathbf 1_A\:|\: \mathcal F ]\big] \mathbf 	P[ B]
       =   \mathbf P[A] \mathbf P[ B].
      \end{align}
      The box-crossing property \eqref{eq:BXP} and the FKG inequality give $\mathbf P[B]\ge c_2$,
      for some constant $c_2 >0$ independent of $n$, $p$ or $p'$. 
     With $\Delta_n=\mathbf P[A]$, this implies that $\mathbf P[A']\ge c_2 \Delta_n$. 
      A similar construction for the dual configuration $\omega_p^\star$ which uses the FKG inequality for decreasing random variables 
      implies that $\Delta_{2n}\ge c_2 \mathbf P[A']\ge c_2^2 \Delta_n$.
\end{proof}

We now define a ``bad'' event which prevents the arms generated by $\calE_n$ from separating.
For $r \leq R \leq n$ and $x \in \partial \La_{2n}$, define the {\em mixed three arm event} $\calA_{\rm mix}(x; r,R)$ 
as the set of configurations $\omega \leq \omega'$ such that
$\La_{2n} \cap [\Lambda_{R}(x) \setminus\Lambda_{r}(x)]$ contains three disjoint paths (or \emph{arms}) $\gamma_1,\gamma_2, \gamma_3$ 
connecting $\Lambda_{r}(x)$ to $\partial \Lambda_{R}(x)$ and either: 
\begin{itemize}
	\item $\gamma_1$ is contained in $\omega$, $\gamma_2$ is contained in $\omega^\star$ and $\gamma_3$ is contained in $\omega'$ or 
	\item $\gamma_1$ is contained in $\omega'$, $\gamma_2$ is contained in $(\omega')^\star$ and $\gamma_3$ is contained in $\omega^\star$.
\end{itemize}
Furthermore, for $0<\eta_1 < \dots <\eta_8 \leq 1$, with $\eta_{i+1}/\eta_i >100$, for each $i$ and for $n \geq \eta_1^{-1}$ define the bad event\footnote{Readers familiar with arm separation may wonder why we chose such a complicated definition for the bad event $\calB_n$.
When working with two configurations rather than a single one, it is harder to characterise scenarios that force arms not to be separated. In particular, the authors believe that excluding the mixed three arm event at a single scale would not suffice to guarantee well separation of arms. Excluding three mixed arms at two scales rather than seven does suffice, but would render the proof of Claim~\ref{claim:2} more tedious. We therefore use seven scales as a technical artefact.}
\begin{align*}
\calB_n = \calB_n(\eta_1,\dots,\eta_8) := \bigcup_{\substack{1 \leq i \leq 7\\x\in \partial \La_{2n}}} \calA_{\rm mix}(x; 10\eta_i n, \eta_{i+1}n/10).
\end{align*}
In the above and henceforth we omit integer parts to lighten notation. 
The lower bound $ n \geq \eta_1^{-1}$ is meant to avoid degeneracies.

The second lemma provides an upper bound on the probability of bad events.

\begin{lemma}\label{lem:2}
    For every $\eps > 0$, there exists $0<\eta_1 < \dots <\eta_8 \leq 1$, with $\eta_{i+1}/\eta_i >100$ for each $i$ such that
   	for every $p'\ge p\ge p_c$ and $n\in [\eta_1^{-1},\xi(p')]$, 
    \[
	    \mathbf P[(\omega_p,\omega_{p'}) \in \mathcal B_n(\eta_1,\dots,\eta_8) ]\le \eps.
    \] 
\end{lemma}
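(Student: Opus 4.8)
The goal is to bound the probability that some mixed three-arm event $\calA_{\rm mix}(x; 10\eta_i n, \eta_{i+1} n/10)$ occurs, uniformly over $x \in \partial\La_{2n}$ and $1 \le i \le 7$, and to do so by choosing the scales $\eta_1 < \dots < \eta_8$ spread far enough apart. The natural strategy is: (i) bound the probability of a \emph{single} mixed three-arm event in an annulus $\Lambda_R(x) \setminus \Lambda_r(x)$ by something that decays like a power of $r/R$; (ii) sum over $x \in \partial\La_{2n}$ (which contributes a polynomial factor in $n$, hence in the ratios of scales) and over the seven choices of $i$; (iii) choose the $\eta_i$ to make the total small. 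The key input for (i) is that a mixed three-arm event implies an \emph{ordinary} (polychromatic) three-arm event in the same annulus: indeed, in either of the two alternatives in the definition, one has a primal path $\gamma_1$, a dual path $\gamma_2$, and a third path $\gamma_3$ which is primal-or-dual, all disjoint; so in particular there are at least two primal-or-dual disjoint crossings of one color and one of the other — this is exactly the event that the annulus is crossed by three disjoint arms not all of the same type. By RSW / the box-crossing property \eqref{eq:BXP} (as recorded in the appendix, cf.\ the three-arm estimate), such a polychromatic three-arm event across an annulus of modulus $R/r$ has probability at most $C (r/R)^{\alpha}$ for universal constants $C, \alpha > 0$, and crucially this bound is uniform in $p \in [p_c, \xi(p')^{-1}\text{-scale}]$ because all relevant scales are below the correlation length $\xi(p')$, where the critical-type RSW bounds apply to both $\omega_p$ and $\omega_{p'}$ (and hence to $\omega_p^\star$, $\omega_{p'}^\star$).

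\textbf{Carrying it out.} First I would fix the universal three-arm exponent: there is $\alpha > 0$ and $C < \infty$ such that for all $p' \ge p \ge p_c$, all $n \le \xi(p')$, all $x$, and all $r \le R \le n$, the probability that the annulus $\Lambda_R(x)\setminus\Lambda_r(x)$ (intersected with $\La_{2n}$) contains three disjoint arms, at least one primal and at least one dual, each in the appropriate one of the four configurations $\omega_p, \omega_p^\star, \omega_{p'}, \omega_{p'}^\star$, is at most $C (r/R)^\alpha$. This follows from the standard RSW-based three-arm upper bound applied after forgetting which configuration each arm lives in (one just needs a primal arm, a dual arm, and a third arm of either type, which in the worst case is the monochromatic-pair three-arm event, still polynomially small by \eqref{eq:BXP}). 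For the $i$-th annulus in $\calB_n$ the ratio is $\frac{10\eta_i n}{\eta_{i+1}n/10} = \frac{100\eta_i}{\eta_{i+1}} < 1$ by the assumption $\eta_{i+1}/\eta_i > 100$, so each such event has probability at most $C (100\eta_i/\eta_{i+1})^\alpha$.

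Then a union bound over $x \in \partial\La_{2n}$ (there are $O(n)$ such vertices) and over $i \in \{1,\dots,7\}$ gives
\[
\mathbf P[(\omega_p,\omega_{p'}) \in \calB_n] \;\le\; C' \, n \sum_{i=1}^{7} \Bigl(\tfrac{100\,\eta_i}{\eta_{i+1}}\Bigr)^{\alpha}.
\]
The linear factor $n$ is a nuisance — it is not bounded — but it can be absorbed by noting that a mixed three-arm event at $x \in \partial\La_{2n}$ in the annulus $\Lambda_R(x)\setminus\Lambda_r(x)$ \emph{with} $R \le n$ and $x$ at distance $2n$ from the origin is really a three-arm event ``in a half-plane-like'' region when $x$ is near a corner/side of $\La_{2n}$, or else, more robustly, one simply uses the three-arm event in the full annulus and pays the entropy $O(n / r)$ of the position of $x$ by the three-arm estimate being summable: more precisely, replace the crude bound by observing that distinct $x$'s whose annuli at the innermost scale $r = 10\eta_i n$ are disjoint can be grouped, so the effective number of ``independent'' locations at scale $i$ is $O(1)$ (the innermost radius is comparable to $n$ itself when $\eta_i$ is not tiny, but for small $\eta_i$ one covers $\partial\La_{2n}$ by $O(1/\eta_i)$ boxes of radius $\eta_i n$, and within each such box the three-arm estimate at modulus $\eta_{i+1}/(100\eta_i)$ applies once). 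Either way one obtains a bound of the form
\[
\mathbf P[(\omega_p,\omega_{p'}) \in \calB_n] \;\le\; C'' \sum_{i=1}^{7} \eta_i^{-1}\Bigl(\tfrac{100\,\eta_i}{\eta_{i+1}}\Bigr)^{\alpha} \;=\; C'' \sum_{i=1}^{7} \frac{100^{\alpha}\,\eta_i^{\alpha-1}}{\eta_{i+1}^{\alpha}},
\]
and one then chooses $\eta_8 = 1$ and, descending, $\eta_i$ small enough relative to $\eta_{i+1}$ (in particular with $\eta_{i+1}/\eta_i$ large, at least $100$) that each of the seven terms is at most $\eps/(7C'')$. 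Since there are only seven terms and each can be made arbitrarily small by taking the corresponding ratio large, this is possible; the resulting $\eta_1$ then fixes the lower cutoff $n \ge \eta_1^{-1}$.

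\textbf{Main obstacle.} The one genuinely delicate point is handling the union over $x \in \partial\La_{2n}$: the number of such $x$ grows with $n$, so a naive union bound does not close. The fix — standard in arm-separation arguments — is that the three-arm estimate is summable over disjoint annuli, so one covers $\partial\La_{2n}$ efficiently by $O(\eta_i^{-1})$ boxes at scale $\eta_i n$ and pays only a $\eta_i^{-1}$ entropy factor, which is then beaten by the polynomial smallness $(\eta_i/\eta_{i+1})^\alpha$ once $\eta_{i+1}/\eta_i$ is taken large. A secondary technical point is ensuring the three-arm upper bound holds uniformly for both $\omega_p$ and $\omega_{p'}$ and their duals for scales up to $\xi(p')$: this is exactly the regime in which \eqref{eq:BXP} (box-crossing with uniform constants below the correlation length) applies, so the bound is indeed uniform, and the reduction of ``mixed'' three arms to ordinary polychromatic three arms — by forgetting which of the four configurations each arm lies in — is what makes the classical estimate directly usable.
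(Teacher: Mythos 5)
There is a genuine gap in your entropy/exponent bookkeeping, and it is the crux of the lemma. You reduce to a ``polychromatic three-arm'' estimate of the form $C(r/R)^{\alpha}$ for some universal $\alpha>0$ coming from RSW, and then try to beat the union over $x\in\partial\La_{2n}$ by covering the boundary with $O(1/\eta_i)$ boxes, arriving at a sum of terms $\eta_i^{-1}(100\eta_i/\eta_{i+1})^{\alpha} = 100^{\alpha}\,\eta_i^{\alpha-1}/\eta_{i+1}^{\alpha}$. You then assert that each such term can be made arbitrarily small by taking $\eta_{i+1}/\eta_i$ large. This is true only if $\alpha>1$: for $\alpha\le 1$ the factor $\eta_i^{\alpha-1}$ does not decay (it blows up if $\alpha<1$, is constant if $\alpha=1$), so sending $\eta_i\to 0$ with $\eta_{i+1}$ fixed does \emph{not} make the term small. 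A generic RSW-based three-arm bound gives some small $\alpha>0$, nowhere near $1$ (and even the exact bulk polychromatic three-arm exponent is $2/3<1$). So as written the argument does not close.

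The missing ingredient --- and the one the paper's proof hinges on --- is that the arm event is rooted at $x\in\partial\La_{2n}$ and constrained to $\La_{2n}$, so it is really a \emph{boundary} (half-plane-type) arm event, for which the exponent is strictly bigger than $1$. The paper's Claim~\ref{claim:1} establishes exactly $\mathbf P[\calA_{\rm mix}(x;r,R)]\le C(r/R)^{1+\delta}$: the exponent $1$ comes from the half-plane two-arm event for a primal--dual pair in a single configuration (proved by the cluster-counting argument: the expected number of clusters crossing $\La_R\setminus\La_{R/2}$ from inside to outside is $O(1)$), and the extra $\delta$ comes from peeling off the third arm (which lives in the other configuration of the pair $(\omega_p,\omega_{p'})$) via Reimer's inequality. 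With exponent $1+\delta$ the union over $x$ indeed costs only $O(n/r)$ and the extra power of $r/R$ absorbs it, which is precisely what makes the paper's choice $\eta_i=\eta_{i+1}^{1+1/\delta}\eps^{1/\delta}$ work. You remark in passing that the event is ``really a three-arm event in a half-plane-like region,'' but then dismiss this in favor of the covering argument --- yet the half-plane structure is not an optional shortcut, it is what supplies the exponent $>1$ that your covering argument silently requires. A secondary imprecision: the three arms in $\calA_{\rm mix}$ live in the coupled pair $(\omega_p,\omega_{p'})$ (one arm is in one configuration, the primal--dual pair in the other), so ``forgetting which configuration each arm lies in'' does not literally produce a single-configuration polychromatic three-arm event; some device like Reimer applied to $(\omega_p,\omega_{p'})$ is needed to factorize.
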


\begin{proof}
We start with a claim on the probability of the mixed three arm event within the critical window. 
\begin{claim}\label{claim:1}
	There exist $\delta > 0$ and $C_2 > 0$ such that
	for every $p'\ge p\ge p_c$, $r\leq R\leq n\leq \xi(p')$ and $x\in \partial \La_{2n}$,
	\begin{align*}
		\mathbf P [(\omega_p,\omega_{p'}) \in \calA_{\rm mix}(x; r,R)] \leq C_2 (r/R)^{1 + \delta}.
	\end{align*}
\end{claim}

The bound above is a standard consequence of the box-crossing property in the critical window \eqref{eq:BXP} and of Reimer's inequality. We provide a proof for completeness at the end of the appendix.

	A direct consequence of Claim~\ref{claim:1} is that there exists a constant $C>0$ such that for all $p$, $p'$, $r$, $R$ and $n$
	as in the statement,
	\begin{align}\label{eq:existence_of_mixed}
		\mathbf P [\exists x \in \partial \La_{2n} \text{ with }(\omega_p,\omega_{p'}) \in \calA_{\rm mix}(x; r,R)] 
		\leq C \tfrac{n}{R}\big(\tfrac{r}{R}\big)^\delta.
	\end{align}	
	Indeed, it suffices to consider points $z_1,\dots, z_K \in \partial \La_{2n}$ distributed in counterclockwise order, 
	with each $z_i$ at a distance $r$ from $z_{i+1}$ (using notation modulo $K$), with $K \leq |\partial \La_{2n}|/r=O(n/r)$.
	If the above event occurs, then $(\omega_p,\omega_{p'}) \in \calA_{\rm mix}(z_i; 2r,R/2)$ for some $1 \leq i\leq K$. 
	Performing a union bound over $i$ gives the inequality \eqref{eq:existence_of_mixed}.
	
	Consider $\eps > 0$ a small constant. Let $\eta_8 = 1$ and $\eta_{i} = \eta_{i+1}^{1+1/\delta}\eps^{1/\delta}$ for every $i \leq 7$. 
	Then, due to \eqref{eq:existence_of_mixed},  for every $i$, 
	\begin{align*}
		\mathbf P [\exists x \in \partial \La_{2n} \text{ with }(\omega_p,\omega_{p'}) \in \calA_{\rm mix}(x; 10\eta_i n, \eta_{i+1}n/10)] 
		\leq C' \eps,
	\end{align*}
	with $C' = 10^{1 + 2\delta}C$.
	A union bound over $i = 1,\dots, 7$ gives	\begin{align*}
		\mathbf P [(\omega_p,\omega_{p'}) \in \calB_n(\eta_1,\dots, \eta_8)] \leq 7 C'\eps. 
	\end{align*}
	By choosing $\eps$ small enough, the above may be rendered arbitrarily small.
\end{proof}

Our final lemma states that when a mixed pivotal exists and the bad event fails to occur, 
the arms are well separated and the event in the definition of $\Delta_{4n}$ may be produced with positive probability. 

\begin{lemma}\label{lem:3}
	For every $0<\eta_1 < \dots <\eta_8 \leq 1$, with $\eta_{i+1}/\eta_i >100$, 
	there exists $C_3 = C_3(\eta_1,\dots,\eta_8) > 0$ such that, when these parameters are used to defined $\calB_n$, 
	for every $p'\ge p\ge p_c$ and  $n\in[\eta_1^{-1}, \xi(p')]$, 
	\begin{align*}
		\mathbf P[(\omega_p,\omega_{p'})\in \calE_n\setminus \mathcal B_n]\le C_3 \Delta_{4n}.
	\end{align*}
\end{lemma}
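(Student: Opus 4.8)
The plan is to follow Kesten's arm-separation strategy, adapted to the two-configuration setting. Suppose $(\omega_p,\omega_{p'}) \in \calE_n \setminus \calB_n$. By definition of $\calE_n$, there is a mixed pivotal edge $e \in \La_n$, i.e. a primal path $\gamma \subset \omega_{p'}$ and a disjoint dual path $\gamma^\star \subset \omega_p^\star$, each running from $\partial \La_{2n}$ to itself and meeting only at the center of $e$. Cutting at $e$, we obtain four arms emanating from the vicinity of $e$ out to $\partial \La_{2n}$: two primal arms in $\omega_{p'}$ (the two halves of $\gamma$) and two dual arms in $\omega_p^\star$ (the two halves of $\gamma^\star$), arranged in alternating cyclic order around $e$. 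The goal is to show that, on the complement of $\calB_n$, these four arms can be taken to arrive at $\partial \La_{2n}$ already well separated — landing in four prescribed arcs of the boundary, and with definite angular gaps between consecutive arms at every dyadic scale between $\eta_1 n$ and $n$ — so that a bounded-probability surgery using the box-crossing property \eqref{eq:BXP} glues the four arms to a left-right crossing of $H_{4n}$ in $\omega_{p'}$ and a blocking dual top-bottom path of $V_{4n}$ in $\omega_p^\star$, producing the event defining $\Delta_{4n}$.

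The key steps, in order: First, formalize ``well separated at scale $R$'': there exist four disjoint sub-arms of $\gamma, \gamma^\star$ crossing the annulus $\La_R(x) \setminus \La_{R/100}(x)$ (for the appropriate reference point $x \in \partial \La_{2n}$, or the origin when $R$ is at the outer scale) whose endpoints on $\partial \La_R$ lie in four separated arcs and which stay away from one another by a fixed fraction of $R$. Second, show that failure of separation at a scale propagates a mixed three-arm configuration: if at scale $\eta_{i+1} n/10$ two of the four arms (necessarily one primal in $\omega_{p'}$ and one dual in $\omega_p^\star$, or two arms of the same type forced apart by a third) cannot be separated, then one can extract from the union of these arms a third disjoint arm of the opposite type at an intermediate scale, so that $\calA_{\rm mix}(x; 10\eta_i n, \eta_{i+1}n/10)$ occurs for some $x \in \partial \La_{2n}$ — contradicting $(\omega_p,\omega_{p'}) \notin \calB_n$. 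Using all seven scales $\eta_1 < \dots < \eta_8$ handles the successive separations needed (outer landing arcs, inner angular gaps, and the technical buffers). Third, once the four arms reach $\partial \La_{2n}$ in separated position, perform the standard gluing: in each of the free regions between consecutive arms, use \eqref{eq:BXP} and FKG to build, with probability bounded below uniformly in $p, p', n$, the primal paths in $\omega_{p'}$ linking the two primal arms to the left and right sides of $H_{4n}$ and the dual paths in $\omega_p^\star$ linking the two dual arms to the top and bottom of $V_{4n}$; crucially the surgery region ($\La_{4n} \setminus \La_{2n}$ together with the separated corridors) is disjoint from the randomness already used, so independence applies. This yields $\omega_{p'} \in \calC(H_{4n})$ and $\omega_p \notin \calC(V_{4n})$ (the dual crossing of $V_{4n}$ blocks any primal vertical crossing), i.e. the event defining $\Delta_{4n}$, with $\mathbf P[\,\cdot \mid (\omega_p,\omega_{p'})\in\calE_n\setminus\calB_n, \text{ arm data}] \geq c$ for some $c = c(\eta_1,\dots,\eta_8) > 0$. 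Summing over the bounded number of choices of landing arcs and integrating gives $\mathbf P[(\omega_p,\omega_{p'}) \in \calE_n \setminus \calB_n] \le C_3 \Delta_{4n}$.

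The main obstacle I expect is the second step — showing that failure of separation genuinely forces a mixed three-arm event on the complement of $\calB_n$. In the classical single-configuration setting this is the heart of the separation lemma and is already delicate; here one must track which of the four arms lives in which configuration and verify that any obstruction to separating them produces three arms of the correct alternating primal/dual/mixed type (matching one of the two cases in the definition of $\calA_{\rm mix}$), rather than some combination the bad event does not control. This is precisely the difficulty the authors flag in the footnote to the definition of $\calB_n$, and the reason for using three mixed arms at seven scales rather than at one: the multiple scales provide enough room to carry out the extraction of the third arm cleanly and to iterate the separation without the combinatorics of a single-scale argument. The gluing step (third) and the extension lemma~\ref{lem:1} that lets one pass from the event at scale $2n$ to the stated scale $4n$ are comparatively routine given \eqref{eq:BXP}.
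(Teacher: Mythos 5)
Your plan is essentially the paper's: use the absence of $\calB_n$ to separate the four arms of the mixed pivotal, then glue with the box-crossing property to produce the event defining $\Delta_{4n}$. Two implementation remarks. First, the paper's Claim~\ref{claim:2} does not run a dyadic-scale separation induction in the Kesten--Nolin style; it fixes the four landing points $x_1,\dots,x_4$ of $\gamma,\gamma^\star$ on $\partial\La_{2n}$, splits into finitely many cases according to which of them are clustered, and re-routes through the annuli $\La_{\eta_{i+1}n/10}(x_i)\setminus\La_{10\eta_i n}(x_i)$ --- the seven scales are precisely enough to handle the worst case where all four endpoints lie within $\eta_5 n$ of one another, rather than to feed an iteration. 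Second, the gluing cannot be justified by independence alone: the corridors you build must pass through $\La_{2n}$, where the arm data already lives, so the paper (as in Lemma~\ref{lem:1}) conditions on a $\sigma$-algebra making the separated arm event measurable and applies FKG; you do mention FKG but then appeal to disjointness, which does not hold inside $\La_{2n}$.
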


\begin{proof}

	We start with a claim.
	\begin{claim}\label{claim:2}
		For $(\omega,\omega') \in \calE_n\setminus \mathcal B_n$ there exist points $y_1,\dots, y_4 \in \partial \La_{2n}$ 
		distributed in counterclockwise order, at a distance of at least $\eta_2n$ from each other, such that
		\begin{enumerate}[label=(\roman*),noitemsep]
			\item\label{item:1} $y_1$ is connected to $y_3$ in  
			$\omega'\cap  \Lambda_{2n} \cap \Lambda_{3\eta_1n}(y_2)^c\cap  \Lambda_{3\eta_1n}(y_4)^c$ and
			\item\label{item:2} $y_2$ is connected to $y_4$ in  
			$\omega^\star \cap  \Lambda_{2n} \cap \Lambda_{3\eta_1n}(y_1)^c\cap  \Lambda_{3\eta_1n}(y_3)^c$.
		\end{enumerate}  
	\end{claim}
	
	This is a purely deterministic statement whose proof can be skipped in a first reading. 
	\medbreak
	\noindent{\em Proof of Claim~\ref{claim:2}:}
		Consider $(\omega,\omega') \in \calE_n\setminus \mathcal B_n$, 
		and paths $\gamma$ and $\gamma^\star$ as in the definition of~$\calE_n$. 
		Write $x_1,x_3 \in \partial \La_{2n}$ for the endpoints of $\gamma$ and $x_2,x_4 \in \partial \La_{2n}$ for the endpoints of $\gamma^*$.
		
		To start, observe that if $y_1,\dots, y_4 \in \partial \La_{2n}$ may be chosen in counterclockwise order, 
		at a distance of at least $\eta_2n$ from each other, and such that $y_1$ is connected to $y_3$ in  
		$\omega'\cap  \Lambda_{2n}$ and $y_2$ is connected to $y_4$ in $\omega^\star \cap  \Lambda_{2n}$, then 
		such connections may be chosen to avoid $\Lambda_{3\eta_1n}(y_2)\cup  \Lambda_{3\eta_1n}(y_4)$ and 
		$\Lambda_{3\eta_1n}(y_1)\cup  \Lambda_{3\eta_1n}(y_3)$, respectively. 
		Indeed, the absence of the mixed three arm events between 
		$\Lambda_{10\eta_{1}n}(y_i)$ and $\partial \Lambda_{\eta_{2}n/10}(y_i)$ for $i = 2$ and $i = 4$
		implies that the connection between $y_1$ and $y_3$ may not be forced to visit $\Lambda_{3\eta_1n}(y_2)\cup  \Lambda_{3\eta_1n}(y_4)$.
		The same holds for the connection between $y_2$ and $y_4$. 
		
		Thus, it suffices to find $y_1,\dots, y_4$ at mutual distances of at least $\eta_2n$ from each other, 
		connected in $\omega^\star$ and $\omega'$, respectively, inside $\La_{2n}$. 
		We will do so based on the points $x_1,\dots, x_4$ above. It will be useful to keep in mind that the connection between $y_1$ and $y_3$ 
		and that between $y_2$ and $y_4$ are allowed to intersect in arbitrary ways. 
		
		We consider several situations (for each case, we also consider any permutation of it): 
		\begin{enumerate}[label=(\roman*),noitemsep]
			\item\label{item:1} the mutual distances between $x_1,\dots, x_4$ are all larger than $\eta_2 n$;
			\item\label{item:1} $\|x_1 - x_2\| \leq \eta_2 n$, but $\|x_1 - x_3\| > \eta_3 n$, $\|x_1 - x_4\| > \eta_3 n$ and  $\|x_3 - x_4\| > \eta_2 n$;
			\item\label{item:1} $\|x_1 - x_2\| \leq \eta_2 n$ and $\|x_3 - x_4\| \leq \eta_2 n$, 
			but $\|x_1 - x_3\| > \eta_4 n$;
			\item\label{item:1} $x_2,x_3 \in \La_{\eta_4 n}(x_1)$, 
			but $\|x_1 - x_4\| \geq \eta_5 n$;
			\item\label{item:1} $x_2,x_3,x_4 \in \La_{\eta_5 n}(x_1)$.
		\end{enumerate}  
		It is immediate that the above cases, together with their permutations, cover all possible settings for the distribution of $x_1,\dots, x_4$ along the boundary of $\La_{2n}$. 
		
		In the first case, we may simply choose $(y_1,\dots, y_4) = (x_1,\dots, x_4)$. In all other cases, some modification of $(x_1,\dots,x_4)$ is needed. We illustrate this in the last case, which is also the most complex.

		With no loss of generality, suppose that $x_1,\dots, x_4$ are distributed from left to right on $\partial \La_{2n} \cap \La_{\eta_5 n}(x_1)$;
		see the left diagram of Fig.~\ref{fig:bad} for an illustration. 
		Then in each annulus $\La_{\eta_{i+1} n/10}(x_1) \setminus \La_{10 \eta_i n}(x_1)$ with $i = 5,6,7$, 
		there exist disjoint subpaths of $\gamma_1,\dots, \gamma_4$ ordered from left to right, crossing from the inside to the outside of the annulus. 
		As the mixed three arm event does not occur in $\La_{\eta_{8} n/10}(x_1) \setminus \La_{10 \eta_7 n}(x_1)$, 
		there exists a path in this annulus contained in $\omega'$, connecting $\gamma_1$ to the left side of $\partial \La_{2n}$.
		Call the endpoint on $\partial \La_{2n}$ of such a path $y_1$. 
		The same argument shows that $\gamma_2$ is connected to a point $y_2$ on the left side of $\partial \La_{2n}$
		in $\La_{\eta_{7} n/10}(x_1) \setminus \La_{10 \eta_6 n}(x_1)$ by a path in $\omega^\star$
		and $\gamma_3$ is connected to a point $y_3$ on the left side of $\partial \La_{2n}$
		in $\La_{\eta_{6} n/10}(x_1) \setminus \La_{10 \eta_5 n}(x_1)$ by a path in $\omega'$.
		Finally, set $y_4= x_4$. The points $y_1,\dots,y_4$ thus obtained satisfy the desired conditions. 
		
		Similar constructions hold in the other cases to produce $y_1,\dots, y_4$. 
	\hfill $\diamond$\bigskip 
	
	\begin{figure}
        \begin{center}
            \includegraphics[width = 0.58\textwidth]{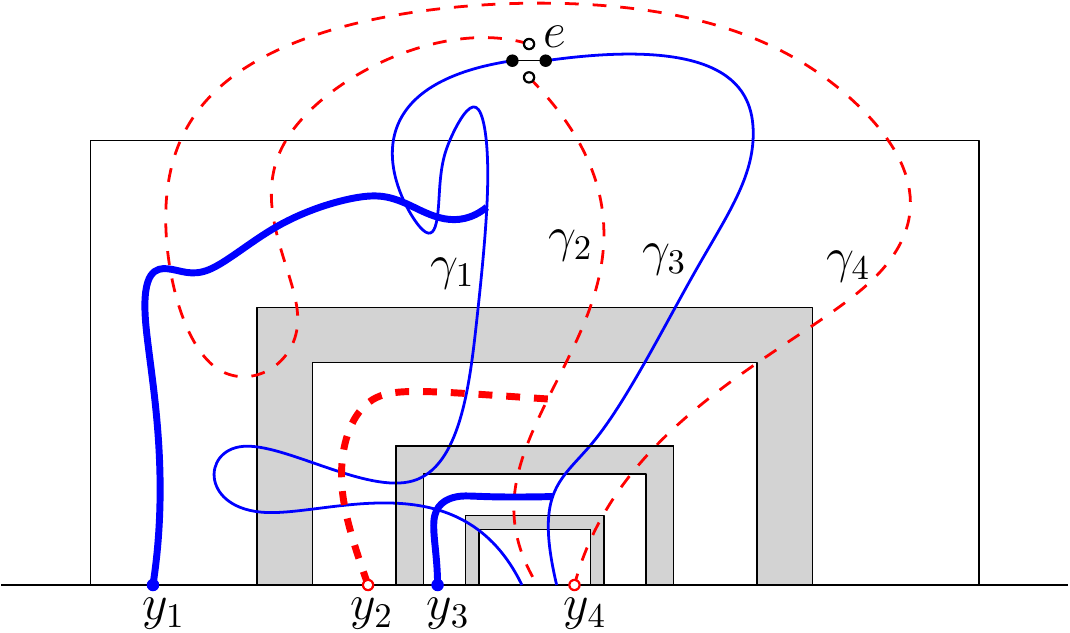}\hspace{.02\textwidth}
            \includegraphics[width = 0.39\textwidth]{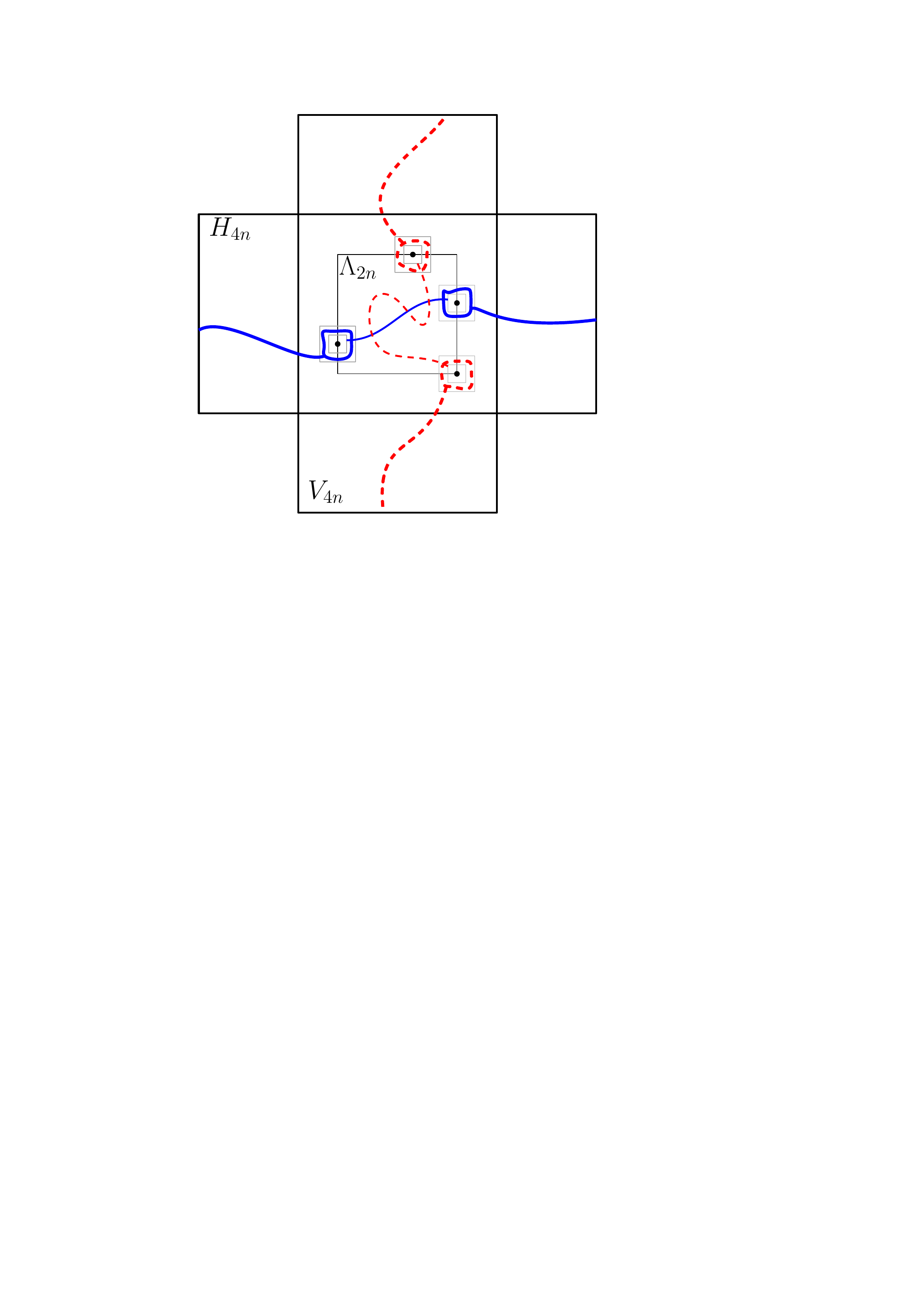}
            \caption{{\em Left:} The annuli $\La_{\eta_{i+1} n/10}(x_1) \setminus \La_{10 \eta_i n}(x_1)$ with $i = 5,6,7$;
             the buffer zones $\La_{10 \eta_{i} n}(x_1) \setminus \La_{\eta_i n/10}(x_1)$ are shaded.
            The paths connecting $\gamma_1$, $\gamma_2$ and $\gamma_3$ to the left-hand side of $\partial \La_{2n}$ 
            inside the respective annuli $\La_{\eta_{i+1} n/10}(x_1) \setminus \La_{10 \eta_i n}(x_1)$ are drawn in bold. 
            They exist due to the lack of mixed three arm events. \newline
            {\em Right:} The event $\calG(i,j,k,\ell)$ occurs due to the thin blue and dashed red paths, belonging to $\omega_{p'}$ and $\omega_p^\star$, respectively. The thicker blue and dashed red paths exist with constant probability due to the box-crossing property, and induce the event in the definition of $\Delta_{4n}$. }
            \label{fig:bad}
        \end{center}
		\end{figure}

	We now prove the lemma. 
	Consider $p$, $p'$ and $n$ as in the statement and $(\omega_p,\omega_{p'})\in \calE_n\setminus \mathcal B_n$.
	Fix points $z_1,\dots, z_K \in \partial \La_{2n}$ distributed in counterclockwise order, 
	with each $z_i$ at a distance of at most $\eta_1 n$ from $z_{i+1}$ (using notation modulo $K$), 
	with $K \leq |\partial \La_{2n}|/\eta_1 n=O(1)$.
	Then, due to Claim~\ref{claim:2}, there exist four points $z_i,z_j,z_k,z_\ell$ among the above, distributed in counter-clockwise order, 
	such that $\La_{\eta_1 n}(z_i)$ is connected to $\La_{\eta_1 n}(z_k)$ in 
	$\omega_{p'}\cap  \Lambda_{2n} \cap \Lambda_{2\eta_1n}(z_j)^c\cap  \Lambda_{2\eta_1n}(z_\ell)^c$, 
	while $\La_{\eta_1 n}(z_j)$ is connected to $\La_{\eta_1 n}(z_\ell)$ in 
	$\omega_{p}^\star\cap  \Lambda_{2n} \cap \Lambda_{2\eta_1n}(z_i)^c\cap  \Lambda_{2\eta_1n}(z_j)^c$.
	Write $\calG(i,j,k,\ell)$ for the existence of the connections above. 
	We conclude that 
	\begin{align}\label{eq:14}
		\mathbf P[(\omega_p,\omega_{p'})\in \calE_n\setminus \mathcal B_n]
		\le \sum_{i,j,k,\ell} \mathbf P[(\omega_p,\omega_{p'})\in \mathcal \calG(i,j,k,\ell)].
	\end{align}
	
	Using a similar argument to that in the proof of Lemma~\ref{lem:1}, 
	the box-crossing property \eqref{eq:BXP} and the FKG-inequality, 
	we deduce that for every quadruple $i,j,k,\ell$ as above, 
	\begin{equation*}
      \mathbf P[(\omega_p,\omega_{p'})\in\mathcal G (i,j,k,\ell)]\le C \Delta_{4n},
    \end{equation*}
    for some constant $C$ independent of $n$. See also the right diagram of Fig.~\ref{fig:bad} for an illustration. 
	Finally, since the number of terms in the sum in \eqref{eq:14} is bounded uniformly in $n$, we obtain the desired conclusion. 
\end{proof}

We are now ready to proceed with the proof of Proposition~\ref{prop:comparison delta Delta}.

\begin{proof}[Proposition~\ref{prop:comparison delta Delta}]
In the proof below, we drop the dependence in the parameters $p,p'$ to lighten our notation:  
we write $\Delta_n:=\Delta_n(p,p')$, 
$\calE_n:=\{(\omega_p,\omega_{p'})\in \calE_n\}$  
and $\delta_n:=\mathbf P[\calE_n]$. 
Even if the parameters are kept  implicit, it is important that the constants $c_1$ and $\eta_1\dots, \eta_8$ introduced in the proof below are independent of $p'\ge p\ge p_c.$   With this notation, our goal is to show that for every $n\le \xi(p')$ we have
\begin{equation}
  \label{eq:7}
  \delta_n\le C_0 \Delta_{4n},
\end{equation}
where $C_0$ is independent of $n$ and  $p'\ge p\ge p_c$.

Let $c_1>0$ be as in Lemma~\ref{lem:1}. 
By Lemma~\ref{lem:2}, we may fix $\eta_1,\dots, \eta_8 > 0$ such that for every $n \in [\eta_1^{-1},\xi(p')]$, we have
\begin{equation}\label{eq:8}
  \mathbf P[(\omega_p,\omega_{p'})\in \mathcal B_n(\eta_1,\dots, \eta_8)]\le \tfrac{c_1}8.
\end{equation}

Then, for every $n\in [\eta_1^{-1},\xi(p')]$, we have
\begin{equation}
  \label{eq:12}
  \delta_n=\mathbf P[\calE_n]= \mathbf P[\calE_n\setminus \mathcal B_n]+  \mathbf P[\calE_n\cap \mathcal B_n].
\end{equation}
Lemma~\ref{lem:3} allows to upper bound the first term on the right-hand side by $C_3\Delta_{4n}$, with $C_3$ depending on $\eta_1, \dots, \eta_8$. 
To bound the second term, observe  that for $\calE_n$ to occur, a translated version of $\calE_{\lfloor n/2\rfloor }$ by some  vector  $x=(\pm \lfloor n/2\rfloor,\pm \lfloor n/ 2\rfloor)$ must occur. 
These events are measurable with respect to the configuration inside $\Lambda_{3n/2}$, hence they  are independent of $\mathcal B_n$. By the union bound we obtain
\begin{equation}
  \label{eq:13}
  \mathbf P[\calE_n\cap \mathcal B_n]
  \le 4 \delta_{\lfloor n/2\rfloor}\mathbf P[\mathcal B_n]\overset{\eqref{eq:8}}{\le}\frac{c_1}2 \delta_{\lfloor n/2\rfloor}.
\end{equation}
Plugging \eqref{eq:13} and the bound given by Lemma~\ref{lem:3} into \eqref{eq:12}, we obtain
\begin{equation}\label{eq:22}
	\delta_n \le C_3\Delta_{4n}+\frac{c_1}2\delta_{\lfloor  n/2\rfloor}.
\end{equation}
Set now $r_n:=\delta_n/\Delta_{4n}$. 
Dividing the above by $\Delta_{4n}$ and using Lemma~\ref{lem:1} to bound $\Delta_{4\lfloor n/2\rfloor} /\Delta_{4n}$ by $1/c_1$, we find that 
\begin{align*}
	r_n\le C_3  + \tfrac12 r_{\lfloor  n/2\rfloor}
\end{align*} 
for every $n\in[\eta^{-1},\xi(p')]$.
An immediate induction shows that $r_n\le 2C_3+C_4$ for every $n\le \xi(p')$, 
where $C_4:=\max\{r_k \: : \: k\le \eta_1^{-1}, \,p'\ge p\ge p_c\}$. This concludes the proof. 
\end{proof}

\section{Derivation of Theorem~\ref{thm:1} from Theorem~\ref{thm:stability}}\label{sec:thm1}

As mentioned in the introduction, this step is classical and we simply sketch previously available arguments. 

\begin{proof}[Theorem~\ref{thm:1}]
Fix $p > p_c$. The constants $c_i,C_i$ below are independent of $p$ or of any spatial variables $n,N$. 

For the upper bound, the inclusion of events and Theorem~\ref{thm:stability} imply that 
\begin{equation}
	\theta(p)\le \theta_{\xi(p)}(p)\le C_0\theta_{\xi(p)}(p_c).
\end{equation}

We turn to the lower bound and start by proving the existence of $c_1>0$ such that
\begin{equation}\label{eq:conn}
	\mathrm P_p[\La_{\xi(p)} \text{ is connected to } \infty] \geq c_1.
\end{equation} 

\begin{figure}
\begin{center}
\includegraphics[height = 6cm]{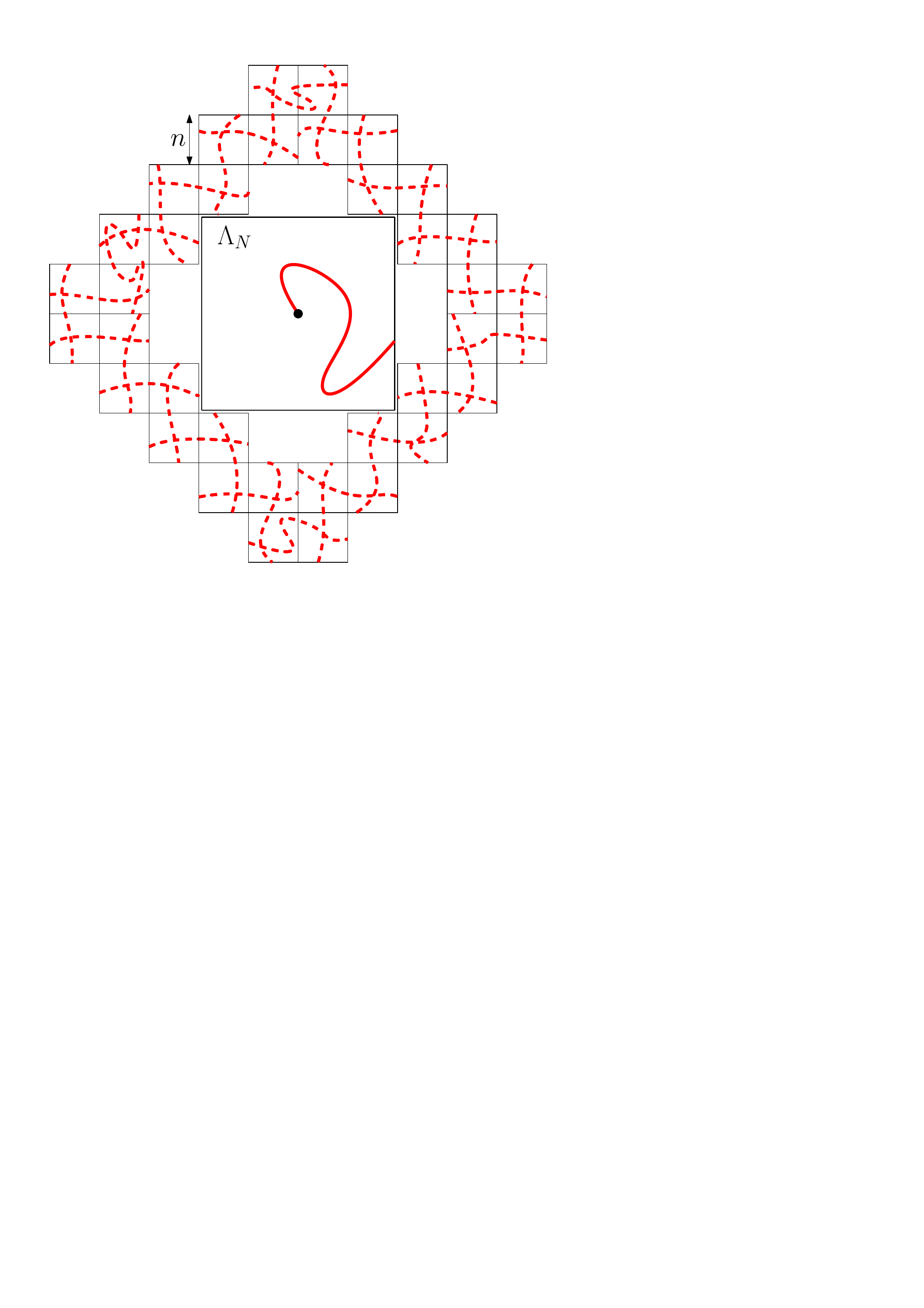}\hspace{.04\textwidth}
\includegraphics[height = 6cm]{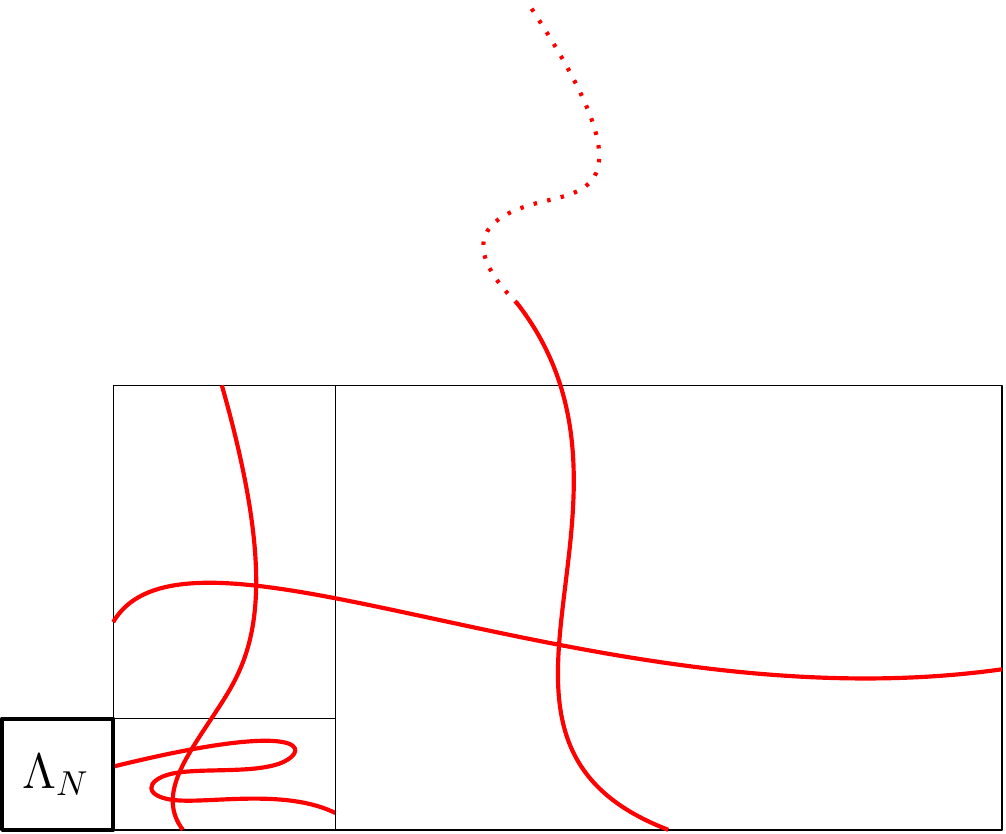}
\caption{{\em Left:} combining dual crossings in the long directions of $16\lceil N/n\rceil$ $2n\times n$ rectangles produces a dual circuit around $\La_N$. If in addition $0$ is connected to $\partial \La_N$ by a primal path, the configuration contributes to $\theta_N(p) - \theta(p)$. \newline 
{\em Right:} combining primal crossings of rectangles $2^{k+1}\xi(p)\times 2^{k}\xi(p)$ for $k \geq 1$ 
produces an infinite cluster intersecting $\La_{\xi(p)}$. Due to \eqref{eq:metd} and the FKG inequality, 
these crossings occur simultaneously with uniformly positive probability.}
\label{fig:xi}
\end{center}
\end{figure}

To do this, observe that combining dual crossings using the FKG inequality as in the left diagram of Fig.~\ref{fig:xi} 
implies that for every $N\ge n\ge 1$,
\begin{align*}
    \theta_N(p) - \theta(p) 
    &\geq \theta_N(p)\mathrm P_{1-p}[\mathcal C([0,2n]\times[0,n])]^{16\lceil N/n\rceil}\\
    &\geq \theta(p) \mathrm P_{1-p}[\mathcal C([0,n]\times[0,2n])]^{C_2\, N/n},
\end{align*}
where the second inequality is due to the Russo-Seymour-Welsh theorem \eqref{eq:RSW} and the obvious bound $\theta_N(p) \geq \theta(p)$.  
Letting $N$ go to infinity, using the definition of $\xi(p)$ and the fact that $\theta(p) > 0$ implies that 
\begin{align}\label{eq:metd}
\mathrm P_{1-p}[\mathcal C([0,n]\times[0,2n])]\le \exp[-c_3 n/\xi(p)].
\end{align}
Using duality\footnote{We use the self-duality of the square lattice but the arguments can be adapted to fit any lattice.} we deduce that 
\[
\mathrm P_{p}[\mathcal C([0,2n-1]\times[0,n])]\ge 1-\exp[-c_3 n/\xi(p)].
\]
Combining crossing events in rectangles of size $2^k\xi(p)$ as in the right diagram of Fig.~\ref{fig:xi} implies \eqref{eq:conn}.

The quasi-multiplicativity of the one arm (Lemma~\ref{lem:quasi}), \eqref{eq:conn}, and the monotonicity in $p$ imply that 
\begin{equation}
C\theta(p) \ge \theta_{\xi(p)}(p) \mathrm P_p[\La_{\xi(p)} \text{ is connected to } \infty] \ge c_1  \theta_{\xi(p)}(p_c),
\end{equation}
which is the desired lower bound. 
\end{proof}

\appendix

\section{Appendix}\label{sec:lem}

In this appendix we derive some standard results necessary for our proof.
The only prior knowledge that is required is the classical Russo-Seymour-Welsh (RSW) theorem \cite{Rus78,SeyWel78,Gri99,KST21},
which states that for every $\rho>0$, there exists $C=C(\rho)>0$ such that 
\begin{equation}\label{eq:RSW}
	\mathrm P_p[\calC([0,n]\times[0,\rho m])]\ge \mathrm P_p[\calC([0,n]\times[0,m])]^C \qquad \text{ for all $m,n \geq 1$ and every $p$}.
\end{equation}
A first consequence of the above is the box crossing property below the correlation length. 

\begin{proposition}[Box crossing property] \label{prop:crossing} 
    For every $\rho>0$, there exists $c(\rho)>0$ such that for every $p$, every $n\le \xi(p)$ and every $m\in[\tfrac1\rho n,\rho n]$,
    \begin{align}\label{eq:BXP}
    	c(\rho)\le \mathrm P_p[\calC([0,n]\times[0,m])]\le 1-c(\rho).
    \end{align}
\end{proposition}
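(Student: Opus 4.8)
\textbf{Plan for the proof of Proposition~\ref{prop:crossing}.}

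The statement is the standard ``box-crossing property below the correlation length''. The plan is to bootstrap from the RSW input \eqref{eq:RSW} together with the only place where the hypothesis $n \le \xi(p)$ enters, namely a lower bound on the crossing probability of some fixed aspect-ratio rectangle at scale $n$. First I would show that there is a uniform constant $c_0>0$ such that $\mathrm P_p[\calC([0,2n]\times[0,n])] \ge c_0$ for every $p$ and every $n \le \xi(p)$. To see this, suppose for contradiction (or just directly, tracking quantities) that the crossing probability of a $2n\times n$ rectangle is small; by \eqref{eq:RSW} applied with a fixed aspect ratio, the crossing probability of an $n \times (\rho n)$ rectangle with $\rho$ large is then also small, and by the standard iteration/gluing of crossings of translated rectangles one gets that the probability of a dual circuit in an annulus $\Lambda_{Kn}\setminus\Lambda_n$ is close to $1$ once $K$ is a large enough constant; combining such dual circuits across dyadic scales $n, Kn, K^2n,\dots$ up to scale $N$ forces $\theta_N(p) - \theta(p)$, or rather $\theta_N(p)$ itself when $p=p_c$ and $\theta_N(p)-\theta(p)$ when $p>p_c$, to decay exponentially in $N/n$ with a rate bounded below, which contradicts $n \le \xi(p)$ via the very definition $\xi(p) = \lim -N/\log[\theta_N(p)-\theta(p)]$ (for $p=p_c$ one uses instead that $\theta_N(p_c)$ does not decay exponentially, which follows from $p_c$ being critical, or one simply restricts to $p>p_c$ and handles $p_c$ by monotone/continuity limits). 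The cleanest packaging is: if the $2n\times n$ crossing probability were below a threshold $c_0$ depending only on the RSW constant, then $\theta_N(p)-\theta(p) \le e^{-cN/n}$ for all $N$, whence $\xi(p) \le n/c < n$, contradiction.

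Once the uniform lower bound $\mathrm P_p[\calC([0,2n]\times[0,n])] \ge c_0$ is in hand for $n \le \xi(p)$, the rest is pure RSW combinatorics with no further use of the correlation length. First I would lower bound $\mathrm P_p[\calC([0,n]\times[0,m])]$ for $m \in [\tfrac1\rho n, \rho n]$: using \eqref{eq:RSW} one passes from the $2n\times n$ crossing to a crossing of a rectangle of any fixed aspect ratio at comparable scale, and then glues boundedly many such crossings (horizontally and vertically, using the FKG inequality on the product measure $\mathbf P$) to cover the target $n\times m$ rectangle; this yields $\mathrm P_p[\calC([0,n]\times[0,m])] \ge c(\rho)$ with $c(\rho)$ depending only on $\rho$ and the RSW constant. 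For the upper bound $\mathrm P_p[\calC([0,n]\times[0,m])] \le 1 - c(\rho)$, I would apply the lower bound to the \emph{dual} lattice: by self-duality, the non-crossing event for $\omega$ contains a dual crossing event for $\omega^\star$ in the perpendicular direction of a rectangle with reciprocal aspect ratio, and $\omega^\star$ has parameter $1-p$, so the same lower-bound reasoning applies provided $n \le \xi$ still controls the relevant scale — here one should be slightly careful and note that $n \le \xi(p)$ is exactly what is needed for the dual estimate too, since the definition of $\xi$ is symmetric under the duality $p \leftrightarrow 1-p$ in the sense that the dual crossing estimates at scale $n$ are governed by the same correlation length (this is where duality of the square lattice is used, as flagged in the excerpt).

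The main obstacle, and the only genuinely non-formal step, is the first one: extracting a \emph{uniform} positive lower bound on a fixed-aspect-ratio crossing probability at every scale $n \le \xi(p)$, uniformly over $p$. The subtlety is that $\xi(p)$ is defined through the decay of $\theta_N(p) - \theta(p)$, so one must convert ``crossing probabilities are small at scale $n$'' into ``$\theta_N - \theta$ decays exponentially with rate $\gtrsim 1/n$'', which requires the dual-circuit gluing argument to be quantitatively clean (each annulus of multiplicative width $K$ contributes an independent factor bounded away from $1$, and there are $\asymp \log(N/n)/\log K \asymp N/n$-many... actually $\asymp \frac{1}{\log K}\log(N/n)$ dyadic annuli, giving polynomial not exponential decay — so in fact one should glue crossings of rectangles at a \emph{fixed} scale $n$ tiling an annulus of width $n$ around $\Lambda_{jn}$ for $j = 1,\dots, N/n$, getting $N/n$ independent annuli each of width $n$, hence genuine exponential decay $e^{-cN/n}$). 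Getting this tiling/independence bookkeeping right, and making sure the threshold $c_0$ depends only on the RSW constant $C(\rho_0)$ for one fixed $\rho_0$, is the crux; everything after it is routine FKG-and-RSW gluing.
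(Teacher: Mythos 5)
Your overall strategy matches the paper's: the whole statement reduces to a uniform lower bound on the crossing probability of a single fixed--aspect-ratio rectangle at scales $n\le\xi(p)$, after which RSW, FKG-gluing, and duality close the argument, and the only place $n\le\xi(p)$ enters is that first lower bound. You also correctly identify this first step as ``the crux''. However, the mechanism you propose for proving it has a genuine gap.

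Your plan is: if the crossing probability of a $2n\times n$ rectangle is small, produce dual circuits in annuli, deduce $\theta_N(p)-\theta(p)\le e^{-cN/n}$, and contradict $\xi(p)\ge n$. You even correctly diagnose that \emph{dyadic} annuli give only $O(\log(N/n))$ scales, and propose instead annuli of fixed width $n$ around $\Lambda_{jn}$, $j=1,\dots,N/n$. But these annuli have growing circumference $\asymp jn$: a dual circuit there is a gluing of $\asymp j$ dual crossings of $2n\times n$ rectangles, so by FKG its probability is only $\gtrsim(1-c_0)^{Cj}$, which decays in $j$ rather than being uniformly bounded below. Consequently $\prod_j\bigl(1-(1-c_0)^{Cj}\bigr)$ does not decay exponentially in $N/n$, and no uniform threshold $c_0$ (independent of $N$) forces $\xi(p)<n$. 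The dyadic version has a parallel problem: covering $\Lambda_{K^{j+1}n}\setminus\Lambda_{K^jn}$ with boundedly many rectangles requires rectangles at scale $K^jn$, and smallness of the crossing probability at scale $n$ does not by itself control crossing probabilities at larger scales.

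The paper sidesteps this entirely by not building dual circuits at all. It runs a Peierls/renormalization argument directly on the one-arm event: let $q:=\mathrm P_p[\Lambda_n\to\partial\Lambda_{2n}]$, and coarse-grain to a site model $\eta$ on $n\mathbb Z^2$ where $\eta_x=\mathbf 1_{\Lambda_n(x)\to\partial\Lambda_{2n}(x)}$. If $0\to\partial\Lambda_N$, a renormalized self-avoiding path of length $\asymp N/n$ of $\eta$-open sites must exist; extracting $\asymp N/(n|\Lambda_4|)$ sites at mutual $\ell^\infty$-distance $\ge 4n$ makes their $\eta$-values independent, and a union bound over paths gives $\theta_N(p)\le 4^{N/n}q^{\lfloor N/(n|\Lambda_4|)\rfloor}$. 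Since this is done for $p\le p_c$ (so $\theta(p)=0$ and $\xi$ is governed by $\theta_N$ itself), $n\le\xi(p)$ immediately forces $q\ge(4e)^{-|\Lambda_4|}$, and a small geometric step converts the one-arm bound into the $4n\times n$ crossing bound. The upper bound is obtained from the lower bound for the dual model at $p\ge p_c$; the remaining ranges of $p$ follow by monotonicity. This dispatch of the $p\le p_c$ / $p\ge p_c$ cases also neatly avoids the awkwardness you raise about interpreting $\xi$ at $p=p_c$ and above. If you want to keep a circuit-flavoured proof, you would first need a renormalization lemma upgrading ``crossing small at scale $n$'' to ``crossing small at every scale $\ge n$'', which is essentially the same Peierls argument in disguise.
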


\begin{proof}
We prove the lower bound for $p \le p_c$. A similar argument applied to the dual model proves the upper bound for $p \ge p_c$. 
The upper bound for $p < p_c$ and the lower bound for $p > p_c$ follow by monotonicity.

Fix $p \le p_c$ and $1\le n\le \xi(p)$. 
Consider the site percolation model $\eta$ on $n\mathbb Z^2$ for which $\eta_x=1$ if $\Lambda_n(x)$ is connected to $\Lambda_{2n}(x)$ in $\omega_p$, and otherwise $\eta_x=0$. For any multiple $N$ of $n$, 
if $0$ is connected to $\partial\Lambda_{N}$ in $\omega_p$, 
there necessarily exists a self-avoiding path of vertices $x_0=0,\dots,x_{N/n}$ in $n\mathbb Z^2$ with $\|x_i-x_{i+1}\|_2=n$ 
and $\eta_{x_i}=1$ for every $0\le i<N/n$. 
For any such possible path, one may extract a subfamily of $\lfloor N/(n|\Lambda_4|)\rfloor$ vertices $x_{i(j)}$ 
that are at a $\ell^\infty$-distance at least $4n$ of each other. 
The $\eta_{x_{i(j)}}$ variables are then independent Bernoulli random variables of parameter 
$q:=\mathrm P_p[\Lambda_n\text{ is connected to }\partial\Lambda_{2n}]$. 
Doing a union bound on possible paths, we deduce that 
\begin{equation}
 \theta_N(p)\le 4^{N/n}q^{\lfloor N/(n|\Lambda_4|)\rfloor}.
\end{equation}
Letting $N$ tend to infinity and using the definition of $\xi(p)$ implies that 
\begin{equation}\label{eq:a1}
	q\ge (4e)^{-|\Lambda_4|}=:c_0.
\end{equation}

Now, for $\Lambda_n$ to be connected to $\partial\Lambda_{2n}$, at least one out of four rectangles of size $4n\times n$ must contain a crossing between its two larger sides. We deduce that 
\begin{equation}\label{eq:a2}
\mathrm P_p[\calC([0,n]\times[0,4n])]\ge c_0/4.
\end{equation}
The result then follows from the RSW theorem \eqref{eq:RSW}.
\end{proof}

The second application is related to the quantity $\theta_{n,N}(p)$, 
which we recall denotes the probability that $\Lambda_n$ is connected to $\partial\Lambda_N$ in $\omega_p$.

\begin{lemma}[Quasi-multiplicaticity of the one arm probability]\label{lem:quasi}
There exists $C>0$ such that for every $p$ and every $n\le k\le \xi(p)$ and $N\ge k$,
\begin{equation}\label{eq:}
\theta_{n,N}(p)\le \theta_{n,k}(p)\theta_{k,N}(p)\le C\theta_{n,N}(p).
\end{equation} 
\end{lemma}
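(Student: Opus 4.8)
The plan is to prove the quasi-multiplicativity of the one-arm probability in two halves. The left inequality $\theta_{n,N}(p) \le \theta_{n,k}(p)\theta_{k,N}(p)$ is the easy direction: if $\Lambda_n$ is connected to $\partial\Lambda_N$, then in particular $\Lambda_n$ is connected to $\partial\Lambda_k$ and, independently (by working on disjoint edge sets, namely $\Lambda_k$ and its complement), the cluster of $\Lambda_n$ must reach $\partial\Lambda_k$ and from there $\Lambda_k$ must be connected to $\partial\Lambda_N$. A clean way to see this: condition on the connected component of $\Lambda_n$ explored from inside $\Lambda_k$; on the event that it reaches $\partial\Lambda_k$, the further connection to $\partial\Lambda_N$ uses only edges outside $\Lambda_k$, whose state is independent, and has probability at most $\theta_{k,N}(p)$ by monotonicity of the exploration boundary and the FKG inequality (or simply by noting the connection to $\partial\Lambda_N$ forces $\Lambda_k \leftrightarrow \partial\Lambda_N$). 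This gives $\theta_{n,N}(p)\le \theta_{n,k}(p)\theta_{k,N}(p)$.

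For the right inequality $\theta_{n,k}(p)\theta_{k,N}(p)\le C\,\theta_{n,N}(p)$, the strategy is the standard gluing argument, using that both $k$ and $N$ are at most (respectively above) the correlation length only through the box-crossing property, which holds at all scales up to $\xi(p)$ by Proposition~\ref{prop:crossing}. Suppose $\Lambda_n \leftrightarrow \partial\Lambda_k$ and, in an independent configuration on the complement region, $\Lambda_k \leftrightarrow \partial\Lambda_N$. I would first show that one can upgrade the event $\{\Lambda_n\leftrightarrow\partial\Lambda_k\}$ to $\{\Lambda_n \leftrightarrow \partial\Lambda_{2k}\}$ at a multiplicative constant cost: the connection from $\Lambda_n$ reaches some point of $\partial\Lambda_k$; then, using the box-crossing property in the annulus $\Lambda_{2k}\setminus\Lambda_{k/2}$ one builds a circuit in that annulus together with a radial crossing, and with uniformly positive conditional probability (FKG) this circuit is connected to the arm coming from $\Lambda_n$ and extends it to $\partial\Lambda_{2k}$. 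Symmetrically, $\{\Lambda_k\leftrightarrow\partial\Lambda_N\}$ may be localized so that the arm crosses the annulus $\Lambda_{2k}\setminus\Lambda_k$ and reaches $\partial\Lambda_N$, at constant cost. The key point is then: once we have an arm from $\Lambda_n$ to $\partial\Lambda_{2k}$ and an arm from $\partial\Lambda_k$ to $\partial\Lambda_N$, both landing in the annulus $\Lambda_{2k}\setminus\Lambda_k$, we can glue them using a circuit in $\Lambda_{2k}\setminus\Lambda_k$, which exists with uniformly positive probability by the box-crossing property; conditionally on the two arms, FKG gives that this circuit is present and touches both arms with probability bounded below by a constant. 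This produces a single connection from $\Lambda_n$ to $\partial\Lambda_N$, so $\theta_{n,N}(p) \ge c\,\theta_{n,k}(p)\theta_{k,N}(p)$, i.e. the claimed $C = 1/c$.

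Concretely, the order of steps is: (1) dispose of the left inequality by exploration and independence; (2) use Proposition~\ref{prop:crossing} to record that the box-crossing property holds in all relevant annuli up to scale $\xi(p)$ — note $k\le\xi(p)$ is exactly what makes the annuli around $\Lambda_k$ admissible; (3) prove an "arm extension" sublemma: $\theta_{n,k}(p) \le C'\,\theta_{n,2k}(p)$ and a matching statement allowing an arm from $\Lambda_k$ to be assumed to start already from $\partial\Lambda_{2k}$, both via circuit-building and FKG; (4) perform the gluing in the annulus $\Lambda_{2k}\setminus\Lambda_k$ between the two independent arms via a further circuit and FKG, extracting the constant; (5) combine. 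The main obstacle is step (4), the gluing: one must be careful that the arm coming from the inside and the arm going to the outside are both present in a configuration whose restrictions to $\Lambda_{2k}$ and to $\Lambda_{2k}^c$ are independent, so that the gluing circuit — which lives in the annulus $\Lambda_{2k}\setminus\Lambda_k$ and whose favorable behavior is being conditioned on the two arms — can be inserted using the FKG inequality on the product measure $\mathbf P$, exactly as in the proof of Lemma~\ref{lem:1}. The bookkeeping of which events are measurable with respect to which region, and the fact that all the crossing probabilities invoked are bounded below uniformly in $p$ (this is where $n\le k\le\xi(p)$ enters crucially, via Proposition~\ref{prop:crossing}), is the only real content; everything else is the routine RSW gluing that appears throughout the paper.
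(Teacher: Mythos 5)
Your proposal is correct and follows essentially the same route as the paper: the left inequality by inclusion (your exploration argument is a slightly more careful rendering of the same fact), and the right inequality by RSW circuits in annuli around $\Lambda_k$ glued to the two arms via the FKG inequality on the product coupling, with $k\le\xi(p)$ ensuring the box-crossing property \eqref{eq:BXP} applies at the relevant scales. The paper packages the circuit-and-radial-crossing construction into a single event $\calE_k$ at scales $k/2$, $k$, $2k$ rather than separately extending each arm, but this is only a cosmetic difference in bookkeeping.
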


\begin{proof}
The left inequality is obvious by inclusion of events. For the right one, let $\calE_k$ be the event that in $\omega_p$, $\Lambda_{k/2}$ is connected to $\partial\Lambda_{2k}$ and there exist circuits in $\Lambda_{k}$ surrounding $\Lambda_{k/2}$ and in $\Lambda_{2k}$ surrounding $\Lambda_{k}$. The box crossing property below the correlation length \eqref{eq:BXP} and the FKG inequality imply a uniform lower bound for the  probability of $\calE_k$.
Finally, if $\calE_k$ occurs and if there exist connections in $\omega_p$ from $\partial\Lambda_n$ to $\partial\Lambda_k$, and  from $\partial\Lambda_k$ to $\partial\Lambda_N$, these combine to produce a path in $\omega_p$ from $\partial\Lambda_n$ to $\partial\Lambda_N$.
The FKG inequality produces the desired conclusion. 
\end{proof}

Finally, we prove the upper bound on the probability of the mixed three arm event used in the proof of Lemma~\ref{lem:2}
			
\begin{proof}[Claim~\ref{claim:1}]
	Fix $x$, $p$, $p'$, $r$, $R$ and $n$ as in the statement. It suffices to treat the case where $R \geq 4r$. 
	First observe that, as a consequence of \eqref{eq:BXP}, 
	the probabilities that $\La_{r}(x)$ is connected to $\partial \La_R(x)$ in $\omega_p$ and in $\omega_{p'}^\star$, respectively, 
	are both bounded by $(r/R)^\delta$ for some universal $\delta > 0$. 
	
	\begin{figure}
	\begin{center}
	\includegraphics[width = .8\textwidth]{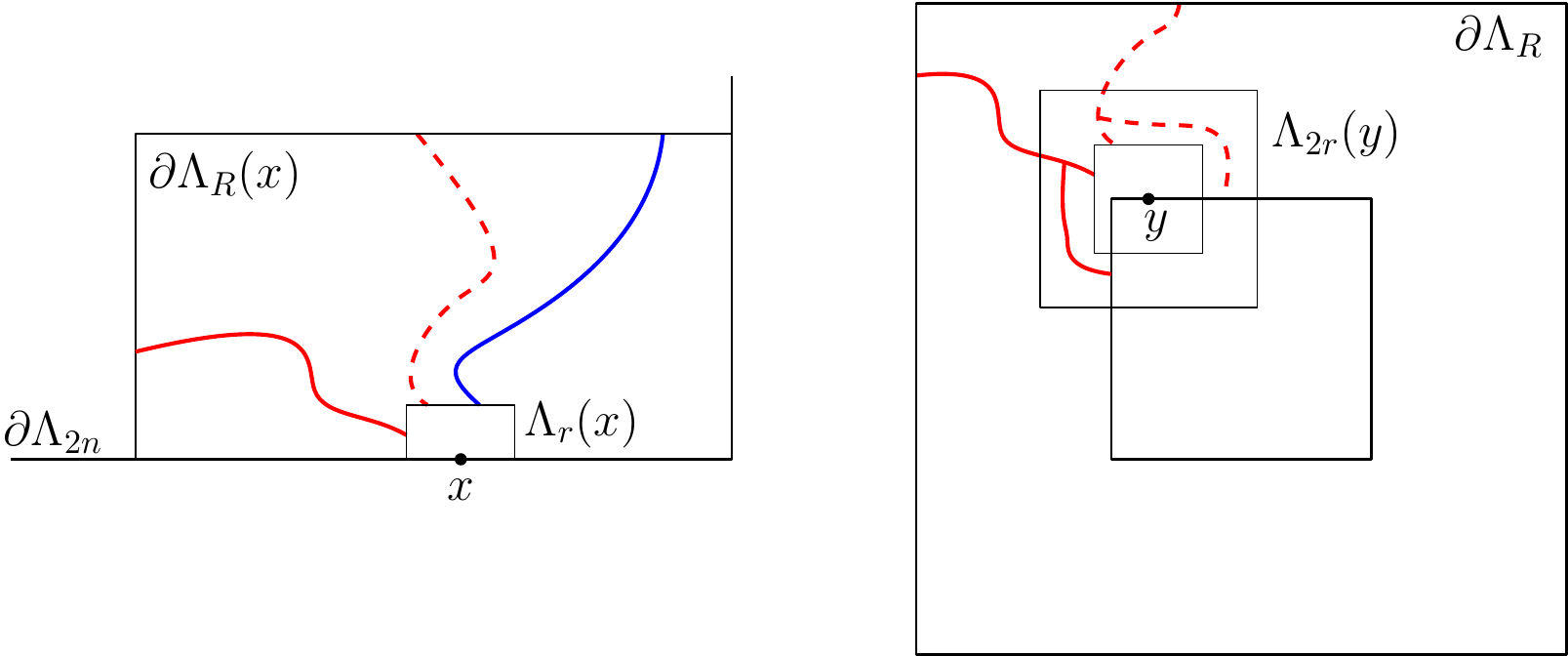}
	\caption{{\em Left:} a mixed three arm event between $\La_r(x)$ and $\La_R(x)$ for some $x \in \partial \La_{2n}$. 
	The red paths are contained in $\omega_p$ and $\omega^\star_p$, the blue one is contained in $\omega_{p'}$. \newline
	{\em Right:} The translation (and potential rotation) of the configuration $\omega_p$ produces primal and dual connection between $\La_{r}(y)$ and $\partial \La_{R}$ contained in $\La_R \setminus \La_{R/2}$. Using the box-crossing property to add the two arcs in $\La_{2r}(y) \setminus \La_{r}(y)$, 
	we create the event in the right-hand side of \eqref{eq:2arms_boundary} at a constant cost.}
	\label{fig:2arms}
	\end{center}
	\end{figure}

	Applying  Reimer's inequality\footnote{The inequality applies here since $(\omega_p,\omega_{p'})$
	may be generated by two independent Bernoulli percolations, one with parameter $p$ and one with parameter $p'-p$.} 
	\cite{Rei00} to the joint law of $(\omega_p,\omega_{p'})$, we find that 
	\begin{align}\label{eq:Rim}
    	&\mathbf P [(\omega_p,\omega_{p'}) \in \calA_{\rm mix}(x; r,R)] \\
    	& \qquad \leq (r/R)^\delta\, \mathrm P_{p}[\La_{r}(x) \text{ connected to }\partial \La_R(x) \text{ by primal and dual paths in $\La_{2n}$}]\nonumber\\
    	& \qquad +(r/R)^\delta \, \mathrm P_{p'}[\La_{r}(x) \text{ connected to }\partial \La_R(x) \text{ by primal and dual paths in $\La_{2n}$}].\nonumber
	\end{align}
	We will focus on the first term. Applying the box-crossing property for both the primal and dual models of $\mathrm P_p$ (see Fig.~\ref{fig:2arms}), 
	we conclude that, for every $x \in \partial \La_{2n}$ and $y \in \partial \La_{R/2}$,
	\begin{align}\label{eq:2arms_boundary}
    	&\mathrm P_{p}[\La_{r}(x) \text{ connected to }\partial \La_R(x) \text{ by primal and dual paths in $\La_{2n}$}]\\
    	&\quad  \leq C \, \mathrm P_{p}[\La_{2r}(y) \cap \partial \La_{R/2} \text{ connected to }\partial \La_{R} \text{ by primal and dual paths in $\La_{R} \setminus \La_{R/2}$}].\nonumber
	\end{align}
	Summing the above over all $y \in \partial \La_{R/2}$, we find 
	\begin{align}\label{eq:2arms_boundary2}
    	& |\partial \La_{R/2}|\, \mathrm P_{p}[\La_{r}(x) \text{ connected to }\partial \La_R(x) \text{ by primal and dual paths in $\La_{2n}$}]\\
    	& \quad \leq 8r C \, \mathrm E_{p}[\text{number of clusters in $\La_{R} \setminus \La_{R/2}$ intersecting both $\partial \La_{R/2}$ and $\partial \La_{R}$}],\nonumber		
	\end{align}
	where $ \mathrm E_{p}$ denotes the expectation with respect to $\mathrm P_{p}$. 
	Indeed, for $\La_{2r}(y) \cap \partial \La_{R/2}$ to be connected to $\partial\La_{R}$ by both primal and dual paths, 
	$y$ needs to be within a distance $2r$ of the endpoint of a primal/dual interface starting on $\partial \La_{R}$ and ending on $\partial\La_{R/2}$. 
	For each such interface, there exist at most $4r$ points $y \in \partial \La_{R/2}$ as above.	
	Moreover, the number of interfaces is twice the number of clusters in the right-hand side of \eqref{eq:2arms_boundary2}.
	
	It is a simple consequence of the box-crossing property \eqref{eq:BXP} that the number of clusters in~\eqref{eq:2arms_boundary2} is dominated by a geometric variable with intensity independent of $R$. Thus, the right-hand side of~\eqref{eq:2arms_boundary2} is uniformly bounded. The same bound holds for $\mathrm P_{p'}$, and inserting these two bounds in~\eqref{eq:Rim} leads to the desired inequality. 
\end{proof}

\newcommand{\etalchar}[1]{$^{#1}$}
\providecommand{\bysame}{\leavevmode\hbox to3em{\hrulefill}\thinspace}
\providecommand{\MR}{\relax\ifhmode\unskip\space\fi MR }
\providecommand{\MRhref}[2]{%
  \href{http://www.ams.org/mathscinet-getitem?mr=#1}{#2}
}
\providecommand{\href}[2]{#2}

\end{document}